\newtheorem{theorem}{Theorem}
\newtheorem{coro}{Corollary}
\newtheorem{remark}{Remark}
\newtheorem{lemma}{Lemma}
\newtheorem{assumption}{Assumption}
\newtheorem{definition}{Definition}
\newtheorem{problem}{Problem}
\begin{document}
%
% paper title
% Titles are generally capitalized except for words such as a, an, and, as,
% at, but, by, for, in, nor, of, on, or, the, to and up, which are usually
% not capitalized unless they are the first or last word of the title.
% Linebreaks \\ can be used within to get better formatting as desired.
% Do not put math or special symbols in the title.
\title{Optimal Control and Stabilization for Networked Control Systems with Asymmetric Information}
%
%
% author names and IEEE memberships
% note positions of commas and nonbreaking spaces ( ~ ) LaTeX will not break
% a structure at a ~ so this keeps an author's name from being broken across
% two lines.
% use \thanks{} to gain access to the first footnote area
% a separate \thanks must be used for each paragraph as LaTeX2e's \thanks
% was not built to handle multiple paragraphs
%
\author{Xiao~Liang,~~Huanshui~Zhang~~and~~Juanjuan~Xu% <-this % stops a space
\thanks{This work is supported by the Taishan Scholar Construction Engineering by Shandong Government, the National Natural Science Foundation of China (61120106011, 61633014, 61403235, 61573221).}
\thanks{J. Xu and H. Zhang are with School of Control Science and Engineering, Shandong University, Jinan, P.R.China. X. Liang is with College of Electrical Engineering and Automation, Shandong University of Science and Technology, Qingdao, Shandong, P.R.China 250061.}% <-this % stops a space
}

\maketitle

% As a general rule, do not put math, special symbols or citations
% in the abstract or keywords.
\begin{abstract}
This paper considers the optimal control and stabilization problems for networked control systems (NCSs) with asymmetric information. In this NCSs model, the remote controller can receive packet-dropout states of the plant, and the available information for the embedded controller are observations of states and packet-dropout states sent from the remote controller. The two controllers operate the plant simultaneously to make the quadratic performance minimized and stabilize the linear plant. For the finite-horizon case, since states of the plant cannot be obtained perfectly, we develop the optimal estimators for the embedded and remote controllers based on asymmetric information respectively. Then we give the necessary and sufficient condition for the optimal control based on the solution to the forward-backward stochastic difference equations (FBSDEs). For the infinite-horizon case, on one hand, the necessary and sufficient condition is given for the stabilization in the mean-square sense of the system without the additive noise. On the other hand, it is shown that the system with the additive noise is bounded in the mean-square sense if and only if there exist the solutions to the two coupled algebraic Riccati equations. Numerical examples on the unmanned underwater vehicle are presented to show the effectiveness of the given algorithm.
\end{abstract}

% Note that keywords are not normally used for peerreview papers.
\begin{IEEEkeywords}
Optimal control, stabilization, networked control systems, asymmetric information.
\end{IEEEkeywords}

% For peer review papers, you can put extra information on the cover
% page as needed:
% \ifCLASSOPTIONpeerreview
% \begin{center} \bfseries EDICS Category: 3-BBND \end{center}
% \fi
%
% For peerreview papers, this IEEEtran command inserts a page break and
% creates the second title. It will be ignored for other modes.
\IEEEpeerreviewmaketitle

\section{Introduction}
Over the course of last few decades, advances in wireless communication have greatly boosted the development of networked control systems (NCSs). NCSs, containing the system, sensors, controllers and actuators where the operation is coordinated through a wireless communication, have attracted research interest due to its broad applications in electronic system, industrial manufacture and mobile communication \cite{R1,R2}. Comparing with the classical feedback control systems with wired point-to-point link, NCSs have been shown to be more cost-effective, provide higher flexibility and reduce the maintenance cost \cite{R3,R4}.

Recently, optimal control with asymmetric information (OCAI) has  received increasing attention due to the urgent demand in applications, such as deep-sea research, co-ordination of supply and demand, unmanned aerial vehicles and automated highway systems \cite{R5,R6,R7}. The so-called OCAI means that the system contains two or several controllers and the feedback information for different controllers are different. The fundamental difference between traditional optimal control (TOC) and OCAI is that for TOC the feedback information for different controllers are the same (same states or same observations) \cite{R8}. Thus, the method of TOC cannot be applied directly to deal with the problem of OCAI.

The research on TOC can be traced back to 50's in last century \cite{R9}. The stochastic optimal control (SOC) problem, pioneered by \cite{R10}, has gained continuous attention\cite{R11,R12,R13}. \cite{R11} considers the general case of SOC when the control weighing matrix and state weighing matrix of the performance are postive-definite and semi-positive definite respectively. \cite{R13} shows the solvability of the SOC problem by raising a generalized Riccati equation.

However, the above references consider the control problem with one or several control channels, and assume that the feedback information for different controllers are identical. This assumption hinders the development of the optimal control in applications. \cite{R14} gives the optimal strategy for the supplier and shows the impact of asymmetric disruption information on the performance of the supplier, the retailer and the supply chain. By solving forward-backward stochastic differential equations involved with two decoupled Riccati equations, \cite{R15} gives the respective optimal feedback strategies of both deterministic and random controllers for the linear stochastic system with asymmetric information. For the stochastic dynamic games with asymmetric information, \cite{R16} introduces the common information based perfect Bayesian equilibria and provides a sequential decomposition of the dynamic game.

Nevertheless, seldom work on NCS with asymmetric information has been investigated. Recently, \cite{R17} studies the NCSs with multiple local controllers and a remote controller where the information for local and remote controllers are different. The optimal control for the finite-horizon case has been solved in \cite{R17}. \cite{R18} considers both the optimal control for the finite-horizon case and the stabilization problem for the infinite-horizon case of the special model as in \cite{R17}. However, both \cite{R17} and \cite{R18} assume that the local controller can observe the state perfectly, which is not feasible in reality. Generally, the state is inevitably interfered with noises (multiplicative noise or additive noise) such that the controller cannot obtain the perfect state but the estimation of the state based on the received observations. Besides, the stabilization problem for NCSs with asymmetric information has not be solved completely so far. Since the controller cannot gain the perfect state, the control problem becomes more difficult and challenging.

In this paper, we consider the NCSs model containing a local device, a remote device and an unreliable communication channel as depicted in Fig. 1. The state transmits via two channels to the embedded controller and remote controller respectively. On one hand, the state $x_k$ is observed by the sensor and then sent to the estimator as the observation $y_k^P$. On the other hand, due to the unreliable communication channel, the state $x_k$ may be lost when transmits to the remote controller. The remote controller sends the received information $\{y_k^W,\ldots,,y_0^W,u_{k-1}^W,\ldots,u_0^W\}$ to the local device. The embedded controller makes its decision based on its own observations and observations of the remote controller, and the remote controller designs its action by using its own observation which results in the asymmetric information for the embedded and remote controllers. The embedded controller and the remote controller perform the plant simultaneously. The aim of this NCSs model is to minimize the quadratic performance and stabilize the plant. This NCSs model derives from increasing applications that request remote control of objects over wireless communication where the communication channels are tended to failure. Generally, the embedded controller may be an integrated chip on the local device with poor transmission capacity and the remote controller can be a mission-control center possessing powerful dispatching ability such that the link from the embedded controller to the remote controller is prone to failure and the negative link is perfect.
\begin{figure}[htbp]
  \begin{center}
  \includegraphics[width=0.48\textwidth]{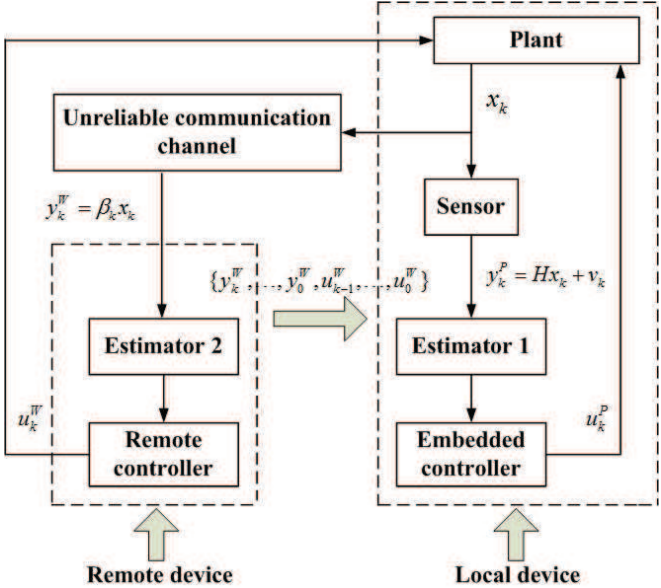}
  \caption{Overview of NCSs with asymmetric information.} \label{fig:digit}
  \end{center}
\end{figure}

In this paper, we shall focus on the optimal control and stabilization problems for NCSs with asymmetric information. Firstly, for the finite-horizon case, we show the optimal estimators for the two controllers respectively based on the asymmetric information. Then by applying the Pontryagin's maximum principle, a solution to the forward-backward stochastic difference equations (FBSDEs) is presented. Based on this solution, the optimal embedded and remote controllers are given. For the infinite-horizon case, by making use of the optimal performance of the finite-horizon case to define the Lyapunov function, we show the stabilization condition and the boundedness condition in the mean-square sense for the system with the additive noise and without the additive noise respectively in terms of two coupled algebraic Riccati equations. At last, we give numerical examples about the unmanned underwater vehicle to testify the effectiveness of the proposed algorithm.

The contributions of this paper are as follows:

(1) It is the first time to investigate and give the complete solution to the optimal control and stabilization problems for NCSs with asymmetric information where states of the plant cannot be obtained perfectly.

(2) For the finite-horizon case, we give the necessary and sufficient condition for the optimal control problem based on the solution to the FBSDEs.

(3) For the infinite-horizon case, the necessary and sufficient condition of the stabilization in the mean-square sense is presented for the system without the additive noise in terms of two coupled algebraic Riccati equations.

(4) We show the necessary and sufficient condition of the boundedness in the mean-square sense for the system with the additive noise. It should be emphasized that it is the first time to give the necessary and sufficient condition of the stabilization problem for linear quadratic gaussian (LQG) control when the system is involved with the additive noise.

The remainder of the paper is organized below. Section II presents the optimal estimators and optimal strategies for the embedded and remote controllers, respectively. The stabilization conditions for the system with additive noise and without the additive noise are given respectively in Section III. Section IV illustrates numerical examples on the unmanned underwater vehicle. The conclusion are given in Section V. The proofs of relevant results are in Appendices.

\emph{Notation:} Define $\mathbb{E}$ as the mathematical expectation operator. $\mathbb{R}^{n}$ presents the $n$-dimensional Euclidean space.  $tr(B)$ represents the trace of matrix $B$. Define $\{\mathcal{F}\{G_k\}\}$ as the natural filtration generated by the random variable $g_k$, i.e., $\mathcal{F}\{G_k\}=\sigma\{g_0,\ldots,g_k\}$. $B\geq0 (>0)$ denotes that $B$ is a positive semi-definite (positive definite) matrix. $I_{\{A\}}$ is an indicator function, i.e., $\varepsilon\in A$, $\mathcal{I}_{\{A\}}=1$, otherwise, $\mathcal{I}_{\{A\}}=0$. $|\lambda_{max}(A)|$ presents the eigenvalue of matrix $A$ with the largest absolute value.
\section{Optimal Control of NCSs}
\subsection{Problem Formulation}
The plant model of the system that is to be controlled takes the form of the discrete-time stochastic difference equation
\begin{align}
x_{k+1}&=Ax_k+B^Wu^W_k+B^Pu^P_k+\omega_k,\label{1}
\end{align}
where $x_k\in \mathbb{R}^n$ is the state, $u_k^P\in\mathbb{R}^P$ is the embedded controller and $u_k^W\in\mathbb{R}^W$ is the remote controller. $A, B^W, B^P$ are the constant matrices with appropriate dimensions. The initial value $x_0\in \mathbb{R}^n$ and $\omega_k\in\mathbb{R}^n$ are Gaussian and independent with  mean ($\mu,0$) and covariance ($\sigma,Q_\omega$).

The observed models for the two controllers are as follows:
\begin{align}
y^P_k&=Hx_k+v_k,\label{2}\\
y^W_k&=\beta_kx_k\label{3},
\end{align}
where $y^W_k\in \mathbb{R}^n$ is the observation for the remote controller and $y^P_k\in \mathbb{R}^m$ is the observation for the embedded controller. $v_k\in\mathbb{R}^m$ is the Gaussian white noise with zero mean and $Q_v$ covariance. $H$ is the constant matrix with appropriate dimension. $\beta_k$ is an independent identically distributed (i.i.d.) Bernoulli random variable presenting the signal transmission through the communication channel, i.e., $\beta_k=1$ signifies the successful transmission with probability $1-p$, and $\beta_k=0$ denotes the dropout of the packet with probability $p$.

The associate performance for the system (\ref{1}) is shown as
\begin{align}
\nonumber J_N&=\mathbb{E}\bigg\{\sum^N_{k=0}\bigg[x_k'Qx_k+u^{W'}_kR^Wu^W+u^{P'}_kR^Pu^P_k\bigg]\\
&\qquad\quad+x_{N+1}'P_{N+1}x_{N+1}\bigg\},\label{4}
\end{align}
where $R^W, R^P, Q$ and $P_{N+1}$ are positive semi-definite. $\mathbb{E}$ takes the mathematical expectation over the random processes $\{\beta_k\}$, $\{\omega_k\}$, $\{v_k\}$ and the random variable $x_0$.

As can be seen from Fig. 1, due to the limiting transmission capacity of the embedded device, the remote controller can merely obtain the observations $\{y^W_0,\ldots,y^W_k\}$ delivered from the local device. On the other hand, the embedded device has the information of itself observations $\{y^P_0,\ldots,y^P_k\}$ and the observations of the remote controller, i.e., $\{y^W_0,\ldots,y^W_k, u_0^W,\ldots,u_{k-1}^W\}$. For simplicity, we denote $\mathcal{F}\{Y^W_k\}$ as the $\sigma$-algebra generated by $\{y^W_0,\ldots,y^W_k\}$ and $\mathcal{F}\{Y^W_k,Y_k^P\}$ as the $\sigma$-algebra generated by $\{y^W_0,\ldots,y^W_k,y^P_0,\ldots,y^P_k\}$.

Then the problem to be solved in this section is formulated as follows:
\begin{problem}
Find the $\mathcal{F}\{Y^W_k\}$-measurable controller $u^W_k$ and the $\mathcal{F}\{Y^W_k,Y_k^P\}$-measurable controller $u^P_k$ such that the performance (\ref{4}) is minimized subject to the system (\ref{1}).
\end{problem}
\begin{remark}
Generally in practice, the state signal $x_k$ is always disturbed by the noise (multiplicative noise or additive noise) when obtained by the controller. In other word, the precise state $x_k$ cannot be acquired by the controller. Different from \cite{R17,R18} of receiving the precise state $x_k$ by the controller, this paper considers that the embedded controller $u^P_k$ cannot obtain the precise state $x_k$ but receive the observation $y^P_k$ which is more practical in application and becomes more difficult.
\end{remark}
\begin{remark}
Due to the existence of the asymmetric information for the two controllers, it is not available to augment the two controllers $u^W_k$ and $u^P_k$ as one controller $U_k$ and then use the traditional optimal control approach \cite{R8} to derive $U_k$.
\end{remark}
\begin{remark}
As can be seen from Fig. 1, the obtainable information for the embedded controller $u^P_k$ are $\{y_0^W,\ldots,y_k^W,y_0^P,\ldots,y_k^P,u_0^W,\ldots,u_{k-1}^W\}$ and for the remote controller $u^W_k$ are $\{y_0^W,\ldots,y_k^W\}$. Obviously, the embedded controller $u_k^P$ cannot use the present time decision of the remote controller $u_k^W$. In other word, the leader-follower approach \cite{R19} of computing $u^W_k$ firstly and then calculating $u_k^P$ based on the result of $u_k^W$, is not suitable. Similarly, the general optimal control strategies for two decision-makers, such as Nash equilibrium \cite{R20} and Stackelberg strategy \cite{R21}, are not appropriate in this paper.
\end{remark}
\subsection{Solution to Problem 1}
Before show the optimal strategies of this section, we shall provide the optimal estimators for the two controllers respectively.
\begin{lemma}
With observations $\{y^W_0,\ldots,y^W_k\}$ for the system (\ref{1}), the optimal estimator for the remote controller $u^W_k$ is presented as
\begin{align}
\hat{x}^W_{k|k}=\mathbb{E}[x_k|\mathcal{F}\{Y^W_k\}]=\gamma_kx_k+(1-\gamma_k)\hat{x}^W_{k|k-1},\label{9}
\end{align}
where $\gamma_k=\mathcal{I}_{\{y^W_k\neq0\}}$ with $P(\gamma_k=1)=1-p$, $\mathcal{I}$ denotes the indicator function and the initial value $\hat{x}^W_{0|-1}=\mu$.

Given observations $\{y^W_0,\ldots,y^W_k,y^P_0,\ldots,y^P_k\}$ for the system (\ref{1}), the optimal estimator for the embedded controller $u^P_k$ is given by
\begin{align}
\hat{x}^P_{k|k}=\mathbb{E}[x_k|\mathcal{F}\{Y^W_k,Y^P_k\}]=\gamma_kx_k+(1-\gamma_k)\hat{x}^{PP}_{k|k},\label{10}
\end{align}
where
\begin{align}
\nonumber\hat{x}^{PP}_{k|k}&=\mathbb{E}[x_k|\mathcal{F}\{Y^W_{k-1},Y^P_k\}]\\
                           &=\hat{x}^P_{k|k-1}+G^P_{k|k-1}(y^P_k-H\hat{x}^P_{k|k-1})\nonumber,
\end{align}
with
\begin{align}
G^P_{k|k-1}=\Sigma^P_{k|k-1}H'(H\Sigma^P_{k|k-1}H'+Q_v)^{-1},\nonumber
\end{align}
and the estimation error covariances
\begin{align}
\nonumber\Sigma^P_{k|k-1}&=\mathbb{E}[(x_k-\hat{x}^P_{k|k-1})(x_k-\hat{x}^P_{k|k-1})']\\
                &=A\Sigma^P_{k-1|k-1}A'+Q_\omega,\label{11}\\
\Sigma^P_{k|k}&=\mathbb{E}[(x_k-\hat{x}^P_{k|k})(x_k-\hat{x}^P_{k|k})']=p\Sigma^{PP}_{k|k},\label{12}\\
\nonumber\Sigma^{PP}_{k|k}&=\mathbb{E}[(x_k-\hat{x}^{PP}_{k|k})(x_k-\hat{x}^{PP}_{k|k})']\\
\nonumber                 &=(I-G^P_{k|k-1}H)\Sigma^P_{k|k-1}(I-G^P_{k|k-1}H)'\\
                          &\quad+G^P_{k|k-1}Q_vG^{P'}_{k|k-1},\nonumber
\end{align}
with the initial value $\hat{x}^P_{0|-1}=\mu$ and $\Sigma^P_{0|-1}=\sigma$.
\end{lemma}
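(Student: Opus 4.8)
The plan is to exploit the special structure of the remote observation $y^W_k=\beta_k x_k$: whenever the packet is successfully received, the remote device sees the exact state. First I would observe that since $x_0$ and $\omega_k$ are Gaussian, $x_k$ admits a density, so $\{x_k=0\}$ has probability zero; hence $\gamma_k=\mathcal{I}_{\{y^W_k\neq0\}}$ equals $\beta_k$ almost surely, and on $\{\gamma_k=1\}$ we have $y^W_k=x_k$. This simultaneously identifies the event $\{\gamma_k=1\}$ as $\mathcal{F}\{Y^W_k\}$-measurable, which is what legitimizes splitting the conditional expectation by cases.

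For the remote estimator (\ref{9}) I would argue on the two events. On $\{\gamma_k=1\}$ the filtration $\mathcal{F}\{Y^W_k\}$ contains $x_k$ exactly, so $\mathbb{E}[x_k\mid\mathcal{F}\{Y^W_k\}]=x_k$. On $\{\gamma_k=0\}$ we have $y^W_k=0$, which by independence of $\{\beta_k\}$ from $(x_j,\omega_j)$ carries no information about $x_k$ beyond $\mathcal{F}\{Y^W_{k-1}\}$; hence the conditional expectation collapses to the one-step predictor $\hat{x}^W_{k|k-1}$. Multiplying each case by its indicator and summing yields (\ref{9}).

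The embedded estimator (\ref{10}) follows the same template, but now the controller additionally possesses the noisy stream $y^P$. On $\{\gamma_k=1\}$ the perfect state is again available, so the estimate is $x_k$. On $\{\gamma_k=0\}$ the current packet is useless and the best estimate is built from $\mathcal{F}\{Y^W_{k-1},Y^P_k\}$; since $y^P_k=Hx_k+v_k$ is a linear Gaussian measurement, the optimal update is the standard Kalman correction $\hat{x}^{PP}_{k|k}=\hat{x}^P_{k|k-1}+G^P_{k|k-1}(y^P_k-H\hat{x}^P_{k|k-1})$, with the gain and innovation covariance as stated. The prediction covariance (\ref{11}) is obtained by propagating the error through the dynamics, the controls being known and therefore not affecting the covariance, while $\Sigma^{PP}_{k|k}$ is the usual symmetric (Joseph-form) measurement update.

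The factor $p$ in (\ref{12}) is the one genuinely new ingredient, and is where I expect the main obstacle to lie. The embedded error $x_k-\hat{x}^P_{k|k}$ vanishes identically on $\{\gamma_k=1\}$ and equals $x_k-\hat{x}^{PP}_{k|k}$ on $\{\gamma_k=0\}$; taking the expectation and using that $\beta_k$ is independent of the estimation error, the successful-transmission event contributes nothing (probability $1-p$) while the drop event contributes $\Sigma^{PP}_{k|k}$ with weight $p$, giving $\Sigma^P_{k|k}=p\,\Sigma^{PP}_{k|k}$. The delicate part throughout is the conditioning bookkeeping: one must verify that the case-split events are measurable with respect to the relevant filtration and that the zero observation truly adds nothing, both of which rest squarely on the independence of $\{\beta_k\}$ from the state and the measurement noises, together with the almost-sure identity $\gamma_k=\beta_k$.
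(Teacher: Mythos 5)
Your argument is correct and follows essentially the same route as the paper: a case split on whether the packet arrives (on $\{\gamma_k=1\}$ the exact state is available; on $\{\gamma_k=0\}$ the estimate reduces to the one-step predictor for the remote device, or to the standard Kalman correction on $y^P_k$ for the embedded device), with the factor $p$ in (\ref{12}) obtained by averaging the error over the two branches. The paper organizes the embedded-estimator part as an induction over $k=0,1,\ldots$ and defers the remote estimator to an external reference, but the substance is identical; your explicit remarks that $\gamma_k=\beta_k$ almost surely and that the drop event is independent of the state and noises are details the paper leaves implicit.
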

\begin{proof}
The optimal estimator $\hat{x}^W_{k|k}$ can be obtained by similar procedures as in \cite{R22}. Now we shall show how to calculate the optimal estimator $\hat{x}^P_{k|k}$.

When $k=0$, the embedded controller can receive the observations $y^W_0$ and $y^P_0$. If $\beta_0=0$, the optimal strategy for the local device to estimate the state $x_0$ is to make use of the observation $y^P_0$. Thus, following the standard Kalman filtering, the optimal estimator is given by
\begin{align}
\nonumber\hat{x}^P_{0|0}&=\hat{x}^{PP}_{0|0}=\hat{x}^P_{0|-1}+G^P_{0|-1}(y^P_0-H\hat{x}^P_{0|-1}),\\
\nonumber \Sigma^{PP}_{0|0}&=(I\hspace{-0.8mm}-\hspace{-0.8mm}G^P_{0|-1}H)\Sigma^P_{0|-1}(I\hspace{-0.8mm}-\hspace{-0.8mm}G^P_{0|-1}H)'\hspace{-0.8mm}+\hspace{-0.8mm}G^P_{0|-1}Q_vG^{P'}_{0|-1},\nonumber
\end{align}
where
\begin{align}
\nonumber G^P_{0|-1}&=\Sigma^P_{0|-1}H'(H\Sigma^P_{0|-1}H'+Q_v)^{-1}.
\end{align}
If $\beta_0=1$, then the local device selects the observation $y^W_0$ to estimate the state $x_0$. Hence, the optimal estimator is as
$\hat{x}^P_{0|0}=x_0$. Thus, the optimal estimator (\ref{10}) holds for $k=0$.

When $k=1$, if $\beta_1=0$, then the embedded controller uses $\{y^P_1,y^P_0,y^W_0\}$ to estimate the state $x_1$. Then using the standard kalman filtering, the optimal estimator is given by
\begin{align}
\nonumber\hat{x}^P_{1|1}&=\hat{x}^{PP}_{1|1}=\hat{x}^P_{1|0}+G^P_{1|0}(y^P_1-H\hat{x}^P_{1|0}),\\
\nonumber \Sigma^{PP}_{1|1}&=(I\hspace{-0.8mm}-\hspace{-0.8mm}G^P_{1|0}H)\Sigma^P_{1|0}(I\hspace{-0.8mm}-\hspace{-0.8mm}G^P_{1|0}H)'\hspace{-0.8mm}+\hspace{-0.8mm}G^P_{1|0}Q_vG^{P'}_{1|0},\nonumber
\end{align}
where
\begin{align}
\nonumber G^P_{1|0}&=\Sigma^P_{1|0}H'(H\Sigma^P_{1|0}H'+Q_v)^{-1},\\
\nonumber \Sigma^P_{1|0}&=A\Sigma^P_{0|0}A'+Q_\omega.
\end{align}
If $\beta_1=1$, the local device applies $\{y^W_1,y^P_0,y^W_0\}$ to estimate the state $x_1$. Thus the optimal estimator is as
$\hat{x}^P_{1|1}=x_1.$
Hence, the estimator (\ref{10}) is valid for $k=1$.

Similarly, we can prove that the optimal estimator (\ref{10}) holds for $k=2,\ldots,N$. This ends the proof of Lemma 1.
\end{proof}
Following the similar discussion of \cite{R23}, we apply the Pontryagin's maximum principle to the system (\ref{1}) with the performance (\ref{4}) to yield the following costate equations:
\begin{align}
\lambda_{k-1}&=\mathbb{E}[A'\lambda_k+Qx_k|\mathcal{F}\{Y^W_k,Y^R_k\}],\label{new1}\\
0&=\mathbb{E}[B^{W'}\lambda_k|\mathcal{F}\{Y^W_k\}]+R^Wu^W_k,\label{new2}\\
0&=\mathbb{E}[B^{P'}\lambda_k|\mathcal{F}\{Y^W_k,Y^R_k\}]+R^Pu^P_k,\label{new3}\\
\lambda_{N}&=\mathbb{E}[P_{N+1}x_{N+1}|\mathcal{F}\{Y^W_k,Y^R_k\}],\label{new4}
\end{align}
where $\lambda_k$ is the costate, $k=0,\ldots,N$.
\begin{remark}
It can easily verify that by augmenting $u_k^W$ with $u_k^P$ as $U_k$ and making use of (\ref{new1})-(\ref{new4}), the remote controller $u_k^W$ can be readily obtained. Then substituting the result of $u_k^W$ into the system (\ref{1}) and using (\ref{new1}), (\ref{new3}) and (\ref{new4}), the embedded controller $u_k^P$ can be acquired. However, from Fig. 1 and Remark 3, the method of computing $u_k^W$ firstly and then calculating $u_k^P$ based on the results of $u_k^W$ is not valid in this paper. Thus, it is necessary to develop a novelty method of calculating the two controllers simultaneously.
\end{remark}
From Fig. 1, it can be observed that the embedded controller $u^P_k$ can receive observations $\{y^W_0,\ldots,y^W_k\}$. Now we make the following definition:
\begin{align}
u^P_k=\hat{u}^P_k+\tilde{u}^P_k,\label{5}
\end{align}
where $\hat{u}^P_k=\mathbb{E}[u^P_k|\mathcal{F}\{Y^W_k\}]$. Obviously, the following properties can be readily obtained:
\begin{align}
\nonumber&\mathbb{E}[\tilde{u}^P_k|\mathcal{F}\{Y^W_k,Y_k^P\}]=\tilde{u}^P_k,\mathbb{E}[\hat{u}^P_k|\mathcal{F}\{Y^W_k,Y_k^P\}]=\hat{u}^P_k,\\
&\mathbb{E}[\tilde{u}^P_k|\mathcal{F}\{Y^W_k\}]=0.\label{6}
\end{align}
By virtue of (\ref{5}), the system (\ref{1}) and the performance (\ref{4}) can be rewritten as
\begin{align}
x_{k+1}&=Ax_k+Bu_k+B^P\tilde{u}^P_k+\omega_k,\label{7}\\
\nonumber J_N&=\mathbb{E}\bigg\{\sum^N_{k=0}\bigg[x_k'Qx_k+u'_kRu_k+\tilde{u}^{P'}_kR^P\tilde{u}^P_k\bigg]\\
&\qquad\quad+x_{N+1}'P_{N+1}x_{N+1}\bigg\},\label{8}
\end{align}
where $u_k=\begin{bmatrix}u^W_k\\\hat{u}^P_k\end{bmatrix}$, $B=\begin{bmatrix}B^W&B^P\end{bmatrix}$ and $R=\begin{bmatrix}R^W&0\\0&R^P\end{bmatrix}$. Throughout this paper, we shall use the system (\ref{7}) and the performance (\ref{8}) instead of (\ref{1}) and (\ref{4}).

Based on the above transformation, we give the following lemma.
\begin{lemma}
Based on (\ref{5}) and (\ref{6}), we transform the costate equations (\ref{new1})-(\ref{new4}) into the following equations:
\begin{align}
\lambda_{k-1}&=\mathbb{E}[A'\lambda_k+Qx_k|\mathcal{F}\{Y^W_k,Y^R_k\}],\label{13}\\
0&=\mathbb{E}[B'\lambda_k|\mathcal{F}\{Y^W_k\}]+Ru_k,\label{14}\\
\nonumber0&=\mathbb{E}[B^{P'}\lambda_k|\mathcal{F}\{Y^W_k,Y^R_k\}]\\
          &\quad-\mathbb{E}[B^{P'}\lambda_k|\mathcal{F}\{Y^W_k\}]+R^P\tilde{u}^P_k,\label{15}\\
\lambda_{N}&=\mathbb{E}[P_{N+1}x_{N+1}|\mathcal{F}\{Y^W_k,Y^R_k\}],\label{16}
\end{align}
where $\lambda_k$ is the costate variable.
\end{lemma}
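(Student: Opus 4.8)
The plan is to verify the four transformed equations one at a time, observing that the decomposition (\ref{5}) only modifies the two stationarity conditions for the controls while leaving the costate recursion and the terminal condition untouched. Indeed, equations (\ref{13}) and (\ref{16}) coincide verbatim with (\ref{new1}) and (\ref{new4}), since neither the backward recursion for $\lambda_{k-1}$ nor the terminal relation for $\lambda_N$ references the controls; hence nothing needs to be proved for these two, and the entire content of the lemma lives in the passage from (\ref{new2})--(\ref{new3}) to (\ref{14})--(\ref{15}).

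The substantive step is to assemble (\ref{14}). I would start from (\ref{new3}) and take the conditional expectation of both sides with respect to the coarser filtration $\mathcal{F}\{Y^W_k\}$. Because $\mathcal{F}\{Y^W_k\}\subseteq\mathcal{F}\{Y^W_k,Y^R_k\}$, the tower property collapses the iterated conditioning on $B^{P'}\lambda_k$ to the single conditioning $\mathbb{E}[B^{P'}\lambda_k|\mathcal{F}\{Y^W_k\}]$, while the definition $\hat{u}^P_k=\mathbb{E}[u^P_k|\mathcal{F}\{Y^W_k\}]$ (with $R^P$ deterministic) turns the penalty term into $R^P\hat{u}^P_k$. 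This produces the auxiliary identity $0=\mathbb{E}[B^{P'}\lambda_k|\mathcal{F}\{Y^W_k\}]+R^P\hat{u}^P_k$. Stacking this identity beneath (\ref{new2}) and reading off the block structure, namely $\begin{bmatrix}B^{W'}\\B^{P'}\end{bmatrix}=B'$, $\begin{bmatrix}u^W_k\\\hat{u}^P_k\end{bmatrix}=u_k$, and $R=\mathrm{diag}(R^W,R^P)$, I recover (\ref{14}) by linearity of the conditional expectation across the two block rows.

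For (\ref{15}) I would substitute $u^P_k=\hat{u}^P_k+\tilde{u}^P_k$ into (\ref{new3}), so the control penalty splits as $R^Pu^P_k=R^P\hat{u}^P_k+R^P\tilde{u}^P_k$, and then eliminate $R^P\hat{u}^P_k$ using the auxiliary identity just derived, i.e. $R^P\hat{u}^P_k=-\mathbb{E}[B^{P'}\lambda_k|\mathcal{F}\{Y^W_k\}]$. What survives is exactly $\mathbb{E}[B^{P'}\lambda_k|\mathcal{F}\{Y^W_k,Y^R_k\}]-\mathbb{E}[B^{P'}\lambda_k|\mathcal{F}\{Y^W_k\}]+R^P\tilde{u}^P_k$, which is (\ref{15}).

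The computation is essentially bookkeeping in conditional expectations; the only point demanding care is the correct invocation of the tower property for the nested filtrations $\mathcal{F}\{Y^W_k\}\subseteq\mathcal{F}\{Y^W_k,Y^R_k\}$, together with the recognition that the $\mathcal{F}\{Y^W_k\}$-projection of (\ref{new3}) is precisely the second block row required to build (\ref{14}). The properties collected in (\ref{6}) are what make the splitting consistent: they certify that $\hat{u}^P_k$ is the $\mathcal{F}\{Y^W_k\}$-measurable part and $\tilde{u}^P_k$ the mean-zero innovation part of $u^P_k$, so that projecting onto $\mathcal{F}\{Y^W_k\}$ kills $\tilde{u}^P_k$ and leaves $\hat{u}^P_k$ fixed.
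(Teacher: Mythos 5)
Your proposal is correct and follows essentially the same route as the paper: project (\ref{new3}) onto $\mathcal{F}\{Y^W_k\}$ via the tower property to obtain the auxiliary identity $0=\mathbb{E}[B^{P'}\lambda_k|\mathcal{F}\{Y^W_k\}]+R^P\hat{u}^P_k$, stack it with (\ref{new2}) to form the block equation (\ref{14}), and subtract it from (\ref{new3}) to obtain (\ref{15}), with (\ref{13}) and (\ref{16}) carried over unchanged. Your treatment is, if anything, slightly more explicit than the paper's about the nesting of the filtrations and the role of the properties in (\ref{6}).
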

\begin{proof}
Taking mathematical expectation on both sides of (\ref{new3}) with $\mathcal{F}\{Y_k^W\}$ and using (\ref{13}), we get
\begin{align}
\nonumber0&=E\left[B^{P'}\hspace{-0.8mm}\lambda_k|\mathcal{F}\{Y_k^W\}\right]\hspace{-0.8mm}+\hspace{-0.8mm}E\left[R^Pu_k^P|\mathcal{F}\{Y_k^W\}\right]\\
          &=E\left[B^{P'}\hspace{-0.8mm}\lambda_k|\mathcal{F}\{Y_k^W\}\right]\hspace{-0.8mm}+R^P\hat{u}^P_k,\label{new5}
\end{align}
Combining (\ref{new2}) with (\ref{new5}), and noting (\ref{7}) and (\ref{8}), it yields
\begin{align}
\nonumber0=\hspace{-0.8mm}E\left[B'\lambda_k|\mathcal{F}\{Y_k^W\}\right]\hspace{-0.8mm}+Ru.
\end{align}
Subtracting (\ref{new5}) from (\ref{new3}), it yields that
\begin{align}
\nonumber           0&=E\left[B^{P'}\hspace{-0.8mm}\lambda_k|\mathcal{F}\{Y_k^W,Y_k^P\}\right]\hspace{-0.8mm}-\hspace{-0.8mm}E\left[B^{P'}\hspace{-0.8mm}\lambda_k|\mathcal{F}\{Y_k^W\}\right]\\
\nonumber&\quad+R^Pu^P-R^P\hat{u}_k^P\\
\nonumber&=E\left[B^{P'}\lambda_k|\mathcal{F}\{Y_k^W,Y_k^P\}\right]\\
\nonumber&\quad-E\left[B^{P'}\lambda_k|\mathcal{F}\{Y_k^W\}\right]+R^L\tilde{u}^P_k.
\end{align}
The proof has been completed.
\end{proof}
\begin{remark}
Through the transformation in Lemma 2, the two controllers $u_k$ and $\tilde{u}^P_k$ can be computed separately. In other word, we can calculate $\tilde{u}^P_k$ without using $u_k^W$. Please see the details in the following theorem.
\end{remark}
Now we are in the position to give the main results of this section.
\begin{theorem}
Problem 1 admits the unique solution if and only if $\Gamma_k$ and $\Omega_k$ are positive definite for $k=0,\ldots,N$.

In this case, the optimal controllers $u^W_k$ and $u^P_k$ are presented by
\begin{align}
u^W_k&=-\begin{bmatrix}I&0\end{bmatrix}\Gamma_k^{-1}M_k\hat{x}^W_{k|k},\label{17}\\
u^P_k&=-\begin{bmatrix}0&I\end{bmatrix}\Gamma_k^{-1}M_k\hat{x}^W_{k|k}-\Omega_k^{-1}L_k(\hat{x}^P_{k|k}-\hat{x}^W_{k|k}),\label{18}
\end{align}
where $\Gamma_k$, $M_k$, $\Omega_k$, $L_k$ and $\Delta_k$ obey
\begin{align}
\Gamma_k&=B'P_{k+1}^WB+R,\label{19}\\
M_k&=B'P_{k+1}^WA,\label{20}\\
\Omega_k&=B^{P'}\Delta_{k+1}B^P+R^P,\label{21}\\
L_k&=B^{P'}\Delta_{k+1}A,\label{22}\\
\Delta_k&=(1-p)P_k^W+pP_k^P,\label{23}
\end{align}
and $P_k^W$, $P_k^P$ satisfy the following coupled Riccati equations:
\begin{align}
P_k^W&=A'P_{k+1}^WA-M_k'\Gamma_k^{-1}M_k+Q,\label{24}\\
P_k^P&=A'\Delta_{k+1}A-L_k'\Omega_k^{-1}L_k+Q,\label{25}
\end{align}
with the terminal values $P_{N+1}^W=P_{N+1}^P=P_{N+1}$.

The optimal performance is given by
\begin{align}
\nonumber J_N^*&=\mathbb{E}\big\{x_0'\big[P_0^W\hat{x}_{0|0}^W+\hspace{-0.8mm}P_0^P(\hat{x}_{0|0}^P-\hspace{-0.8mm}\hat{x}_{0|0}^W)\big]\big\}\hspace{-0.8mm}+\hspace{-0.8mm}\sum_{k=0}^Ntr\bigg\{\Sigma_{k|k}^P\\
\nonumber&\quad\times[A'\Delta_{k+1}A\hspace{-0.8mm}+Q-p(A-G_{k+1|k}^PHA)'P_{k+1}^P(A\\
\nonumber&\quad-G_{k+1|k}^PHA)]\hspace{-0.8mm}+\hspace{-0.8mm}Q_\omega[(\Delta_{k+1}-p(I\hspace{-0.8mm}-G_{k+1|k}^PH)'P_{k+1}^P\\
\nonumber&\quad\times(I-G_{k+1|k}^PH)]-pQ_vG_{k+1|k}^{P'}P_{k+1}^PG_{k+1|k}^P\\
&\quad+\Sigma_{N+1|N+1}^PP_{N+1}\bigg\}.\label{26}
\end{align}
Moreover, the optimal costate $\lambda_{k-1}$ and estimators $\hat{x}_{k|k}^W$, $\hat{x}_{k|k}^P$ satisfy the following non-homogeneous relationship:
\begin{align}
\lambda_{k-1}=P_k^W\hat{x}_{k|k}^W+P_k^P(\hat{x}_{k|k}^P-\hat{x}_{k|k}^W).\label{27}
\end{align}
\begin{proof}
See Appendix A.
\end{proof}
\end{theorem}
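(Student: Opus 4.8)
The plan is to solve the coupled FBSDEs (\ref{13})--(\ref{16}) by the sweep method: I posit the non-homogeneous relation (\ref{27}), $\lambda_{k-1}=P_k^W\hat{x}_{k|k}^W+P_k^P(\hat{x}_{k|k}^P-\hat{x}_{k|k}^W)$, and verify it by backward induction, reading off the control laws and the Riccati recursions along the way. The base case is immediate: since $P_{N+1}^W=P_{N+1}^P=P_{N+1}$, the terminal condition (\ref{16}) reads $\lambda_N=P_{N+1}\hat{x}_{N+1|N+1}^P$, which is exactly (\ref{27}) evaluated at $k=N+1$.

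For the inductive step I assume (\ref{27}) holds at index $k+1$, and the crux of the argument is to evaluate the two conditional expectations of $\lambda_k$ appearing in (\ref{14}) and (\ref{15}). First I would record the estimator dynamics implied by Lemma 1 and the rewritten system (\ref{7}): the tower property together with $\mathbb{E}[\tilde{u}_k^P|\mathcal{F}\{Y_k^W\}]=0$ gives $\hat{x}_{k+1|k}^W=A\hat{x}_{k|k}^W+Bu_k$ and $\mathbb{E}[x_{k+1}|\mathcal{F}\{Y_k^W,Y_k^P\}]=A\hat{x}_{k|k}^P+Bu_k+B^P\tilde{u}_k^P$. Because $\gamma_{k+1}$ is i.i.d. Bernoulli with $P(\gamma_{k+1}=1)=1-p$ and independent of the past, averaging the filter updates (\ref{9})--(\ref{10}) yields $\mathbb{E}[\hat{x}_{k+1|k+1}^W|\mathcal{F}\{Y_k^W\}]=\hat{x}_{k+1|k}^W$ and $\mathbb{E}[\hat{x}_{k+1|k+1}^P-\hat{x}_{k+1|k+1}^W|\mathcal{F}\{Y_k^W\}]=0$, so that $\mathbb{E}[\lambda_k|\mathcal{F}\{Y_k^W\}]=P_{k+1}^W\hat{x}_{k+1|k}^W$. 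A parallel but more delicate computation over the finer filtration produces the clean identity $\mathbb{E}[\lambda_k|\mathcal{F}\{Y_k^W,Y_k^P\}]=P_{k+1}^W\hat{x}_{k+1|k}^W+\Delta_{k+1}(\hat{x}_{k+1|k}^P-\hat{x}_{k+1|k}^W)$, where the convex combination $\Delta_{k+1}=(1-p)P_{k+1}^W+pP_{k+1}^P$ of (\ref{23}) arises precisely from the dropout probability. I expect these two evaluations to be the main obstacle, since they must correctly weave together the packet-dropout indicator $\gamma_{k+1}$ and the asymmetry of the two filtrations.

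Granting these identities, the rest follows by substitution. Inserting $\mathbb{E}[\lambda_k|\mathcal{F}\{Y_k^W\}]$ into (\ref{14}) gives $(B'P_{k+1}^WB+R)u_k=-B'P_{k+1}^WA\hat{x}_{k|k}^W$, i.e. $u_k=-\Gamma_k^{-1}M_k\hat{x}_{k|k}^W$, which splits into (\ref{17}) and the $\hat{u}_k^P$ part of (\ref{18}); subtracting the two conditional expectations and using (\ref{15}) gives $\Omega_k\tilde{u}_k^P=-L_k(\hat{x}_{k|k}^P-\hat{x}_{k|k}^W)$, the remaining part of (\ref{18}). These two solves require exactly the invertibility of $\Gamma_k$ and $\Omega_k$. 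Back-substituting the closed loops $\hat{x}_{k+1|k}^W=(A-B\Gamma_k^{-1}M_k)\hat{x}_{k|k}^W$ and $\hat{x}_{k+1|k}^P-\hat{x}_{k+1|k}^W=(A-B^P\Omega_k^{-1}L_k)(\hat{x}_{k|k}^P-\hat{x}_{k|k}^W)$ into (\ref{13}) and grouping the $\hat{x}_{k|k}^W$ and $(\hat{x}_{k|k}^P-\hat{x}_{k|k}^W)$ terms, the identities $M_k'=A'P_{k+1}^WB$ and $L_k'=A'\Delta_{k+1}B^P$ collapse the coefficients to $A'P_{k+1}^WA-M_k'\Gamma_k^{-1}M_k+Q=P_k^W$ and $A'\Delta_{k+1}A-L_k'\Omega_k^{-1}L_k+Q=P_k^P$, i.e. the Riccati equations (\ref{24})--(\ref{25}), and reproduces (\ref{27}) at index $k$, closing the induction.

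Finally, to obtain the biconditional and the cost (\ref{26}), I would run a completion-of-squares argument. Telescoping the increments $\mathbb{E}[\lambda_k'x_{k+1}]-\mathbb{E}[\lambda_{k-1}'x_k]$ against the running cost and using (\ref{27}) recasts $J_N$ as the leading term $\mathbb{E}[x_0'\lambda_{-1}]$, which by (\ref{27}) equals the first line of (\ref{26}), plus the nonnegative quadratic terms $\sum_k\mathbb{E}[(u_k-u_k^*)'\Gamma_k(u_k-u_k^*)+(\tilde{u}_k^P-\tilde{u}_k^{P*})'\Omega_k(\tilde{u}_k^P-\tilde{u}_k^{P*})]$, together with the trace terms generated by the process noise $\omega_k$, the measurement noise $v_k$ and the estimation-error covariances $\Sigma_{k|k}^P$. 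Positive definiteness of $\Gamma_k$ and $\Omega_k$ is then exactly equivalent to $J_N$ admitting a unique minimizer, delivering both necessity and sufficiency, while collecting the residual trace contributions produces (\ref{26}); this accounting of the covariance and noise terms is the other place where I expect the bookkeeping to become heavy.
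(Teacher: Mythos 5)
Your proposal follows essentially the same route as the paper's Appendix A: the backward induction establishing the costate--estimator relation (\ref{27}) together with the conditional-expectation identities $\mathbb{E}[\lambda_k|\mathcal{F}\{Y_k^W\}]=P_{k+1}^W\hat{x}_{k+1|k}^W$ and $\mathbb{E}[\lambda_k|\mathcal{F}\{Y_k^W,Y_k^P\}]=P_{k+1}^W\hat{x}_{k+1|k}^W+\Delta_{k+1}(\hat{x}_{k+1|k}^P-\hat{x}_{k+1|k}^W)$ is exactly the paper's induction via (\ref{A.2})--(\ref{A.4}), and your telescoping of $\mathbb{E}[\lambda_{k-1}'x_k]$ is the paper's value function $V_k$ with its completion of squares. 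The only caveat is that your closing claim that positive definiteness of $\Gamma_k,\Omega_k$ is ``exactly equivalent'' to unique solvability would still need a separate necessity argument (the paper itself defers this to \cite{R18}), but otherwise the two proofs coincide.
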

\begin{remark}
It is noted that the non-homogeneous relationship (\ref{27}) is the solution to the FBSDEs (\ref{7}) and (\ref{13}). The key of obtaining the optimal strategy is to derive the non-homogeneous relationship (\ref{27}) and the maximum principle (\ref{13})-(\ref{16}), which are quite different from those of \cite{R18}.
\end{remark}
\section{Stabilization of NCSs}
In this section, the infinite horizon optimal control and stabilization problems will be solved. To make thoroughly study on the problems of the infinite horizon case, we shall proceed the research from two aspects, i.e., the system (\ref{7}) without the additive noise $\omega_k$ and with the additive noise $\omega_k$ respectively.
\begin{remark}
In fact, many references have investigated the stabilization problem for the system without the additive noise from several areas such as the minimum data rate \cite{R24} and the mean-square small gain \cite{R25}. It is noted that due to the existence of the additive noise, for the stabilization problem of the system with additive noise, only the boundedness in the mean square sense can be obtained \cite{R26}. In other words, the system cannot be stabilizable in the mean square sense in the presence of the additive noise. To derive a necessary and sufficient condition for the stabilization in the mean square sense, it is essential to study the system (\ref{7}) without the additive noise.
\end{remark}
\subsection{Stabilization in the Mean-Square Sense}
In this subsection, the system (\ref{7}) shall be written as the following equation:
\begin{align}
x_{k+1}&=Ax_k+Bu_k+B^P\tilde{u}^P_k.\label{28}
\end{align}
The associate infinite-horizon performance is given by
\begin{align}
J=&\mathbb{E}\sum_{k=0}^\infty[{x_k}'Qx_k+u_{k}'Ru_k+\tilde{u}_{k}^{P'}R^P\tilde{u}_{k}^P].\label{29}
\end{align}
We make some standard assumptions:
\begin{assumption}
$R^P>0$, $R^W>0$ and $Q=D'D\geq0$ for some matrices $D$.
\end{assumption}
\begin{assumption}
($A,Q^{\frac{1}{2}}$) is observable and ($A,H$) is detectable.
\end{assumption}
Before give the main results of this subsection, we present the following definitions:
\begin{definition}
The system (\ref{28}) with $u_k=0$ and $\tilde{u}_k^P=0$ is called asymptotically mean-square sense stable if the following equality
\begin{align}
\lim_{k\to\infty}\mathbb{E}(x_k'x_k)=0\nonumber
\end{align}
holds for any initial values $x_0$.
\end{definition}
\begin{definition}
The system (\ref{28}) is said to be stabilizable in the mean-square sense if there exist the $\mathcal{F}\{Y^W_k\}$-measurable $u_k=L^W\hat{x}_{k|k}^W$ and $\mathcal{F}\{Y_k^W,Y_k^P\}$-measurable $\tilde{u}_k^P=L^P(\hat{x}_{k|k}^P-\hat{x}_{k|k}^W)$ with constant matrices $L^W$ and $L^P$ such that for any $x_0$, the closed-loop system of (\ref{28}) is asymptotically mean-square stable.
\end{definition}
The problem to be dealt with in this subsection is presented below.
\begin{problem}
Find the $\mathcal{F}\{Y_k^W\}$-measurable $u_k$ and $\mathcal{F}\{Y^W_k,Y_k^P\}$-measurable $\tilde{u}_k^P$ such that the closed-loop system of (\ref{28}) is stabilizable in the mean-square sense and the infinite-horizon performance (\ref{29}) is minimized.
\end{problem}
Firstly, we show the convergence of the optimal estimators for the embedded controller and remote controller in the following lemma.
\begin{lemma}
Under Assumption 2, the estimation error covariance $\Sigma_{k|k}^P$ is convergent, i.e., $\lim_{k\to\infty}\Sigma_{k|k}^P=\Sigma^P$. Under Assumption 2, if $\sqrt{p}|\lambda_{max}(A\hspace{-0.8mm}-\hspace{-0.8mm}B^P\Omega^{-1}L)|<1$, then $\Sigma_{k|k}^W$ is convergent, i.e., $\lim_{k\to\infty}\Sigma_{k|k}^W=\Sigma^W$.
\end{lemma}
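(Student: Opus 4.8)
The plan is to prove the two convergence claims separately, reducing each to a standard matrix-recursion convergence theorem. For $\Sigma^P_{k|k}$ I would first collapse the three relations of Lemma 1 into a single recursion for the one-step prediction covariance. Substituting $\Sigma^P_{k-1|k-1}=p\Sigma^{PP}_{k-1|k-1}$ and the measurement update $\Sigma^{PP}_{k-1|k-1}=\Sigma^P_{k-1|k-2}-\Sigma^P_{k-1|k-2}H'(H\Sigma^P_{k-1|k-2}H'+Q_v)^{-1}H\Sigma^P_{k-1|k-2}$ into $\Sigma^P_{k|k-1}=A\Sigma^P_{k-1|k-1}A'+Q_\omega$ gives
\begin{align}
\Sigma^P_{k|k-1}&=(\sqrt p A)\big[\Sigma^P_{k-1|k-2}-\Sigma^P_{k-1|k-2}H'(H\Sigma^P_{k-1|k-2}H'+Q_v)^{-1}\nonumber\\
&\quad\times H\Sigma^P_{k-1|k-2}\big](\sqrt p A)'+Q_\omega.\nonumber
\end{align}
This is precisely the standard Kalman prediction Riccati recursion for the quadruple $(\sqrt p A,H,Q_\omega,Q_v)$, the packet-loss probability entering only through the scaling $A\mapsto\sqrt p A$. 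Since $(A,H)$ is detectable by Assumption 2 and scaling $A$ by $\sqrt p<1$ moves every unobservable mode strictly inside the unit disk (row scaling leaves the unobservable subspace unchanged), the pair $(\sqrt p A,H)$ is detectable as well. I would then invoke the classical convergence theorem for the filtering Riccati equation to conclude that $\Sigma^P_{k|k-1}$ converges for any initial $\sigma\ge0$; continuity of the measurement-update map yields $\Sigma^P_{k|k}=p\Sigma^{PP}_{k|k}\to\Sigma^P$.

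For $\Sigma^W_{k|k}$ the key is to expose the error recursion. Writing $e^W_k=x_k-\hat x^W_{k|k}$, $e^P_k=x_k-\hat x^P_{k|k}$ and $d_k=\hat x^P_{k|k}-\hat x^W_{k|k}=e^W_k-e^P_k$, the orthogonality principle gives $\mathbb E[e^P_k d_k']=0$, because $d_k$ is $\mathcal F\{Y^W_k,Y^P_k\}$-measurable while $e^P_k$ is orthogonal to that $\sigma$-algebra. Hence, with $D_k:=\mathbb E[d_k d_k']$, one obtains the clean decomposition $\Sigma^W_{k|k}=\Sigma^P_{k|k}+D_k$. Next, since $\mathbb E[\tilde u^P_{k-1}\mid\mathcal F\{Y^W_{k-1}\}]=0$ by (\ref{6}), the remote prediction is $\hat x^W_{k|k-1}=A\hat x^W_{k-1|k-1}+Bu_{k-1}$; using the optimal $\tilde u^P_{k-1}=-\Omega_{k-1}^{-1}L_{k-1}d_{k-1}$ from (\ref{18}) together with $e^W_{k-1}=e^P_{k-1}+d_{k-1}$ I get
\begin{align}
x_k-\hat x^W_{k|k-1}&=Ae^P_{k-1}+\Phi_{k-1}d_{k-1}+\omega_{k-1},\nonumber\\
\Phi_{k-1}&:=A-B^P\Omega_{k-1}^{-1}L_{k-1}.\nonumber
\end{align}

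Because $e^P_{k-1}$, $d_{k-1}$ and $\omega_{k-1}$ are mutually uncorrelated and $\Sigma^W_{k|k}=p\,\Sigma^W_{k|k-1}$ (the factor $p$ arising from averaging over the independent $\gamma_k$, the error being zero when $\gamma_k=1$), subtracting $\Sigma^P_{k|k}$ and using $\Sigma^P_{k|k-1}=A\Sigma^P_{k-1|k-1}A'+Q_\omega$ reduces everything to the linear matrix recursion $D_k=p\,\Phi_{k-1}D_{k-1}\Phi_{k-1}'+F_{k-1}$, with forcing $F_{k-1}:=p\Sigma^P_{k|k-1}-\Sigma^P_{k|k}\ge0$. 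By the first part $F_{k-1}\to F$ and $\Phi_{k-1}\to\Phi=A-B^P\Omega^{-1}L$, so this is an asymptotically time-invariant linear recursion whose limiting homogeneous operator is $X\mapsto p\,\Phi X\Phi'$. The spectral radius of that operator equals $p\,|\lambda_{max}(\Phi)|^2$ (the map $X\mapsto\Phi X\Phi'$ has eigenvalues $\lambda_i(\Phi)\lambda_j(\Phi)$), so the recursion converges for every initialization exactly when $p\,|\lambda_{max}(\Phi)|^2<1$, i.e.\ $\sqrt p\,|\lambda_{max}(A-B^P\Omega^{-1}L)|<1$. This gives $D_k\to D$ and hence $\Sigma^W_{k|k}=\Sigma^P_{k|k}+D_k\to\Sigma^P+D=\Sigma^W$.

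I expect the delicate step to be the second claim. The main obstacle is obtaining the decoupling $\Sigma^W_{k|k}=\Sigma^P_{k|k}+D_k$ via the orthogonality principle and then correctly isolating $\Phi_{k-1}=A-B^P\Omega_{k-1}^{-1}L_{k-1}$ as the matrix driving the homogeneous part, so that the stated spectral condition emerges precisely as the contraction condition for the resulting Lyapunov-type recursion for $D_k$. A secondary technical point is justifying convergence of the merely asymptotically (rather than exactly) time-invariant recursion, which follows from the limiting operator being a contraction together with the vanishing perturbations $\Phi_{k-1}-\Phi$ and $F_{k-1}-F$ supplied by the first part.
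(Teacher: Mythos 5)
Your proposal is correct and follows essentially the same route as the paper: the first claim is reduced to the standard Riccati recursion with $A$ replaced by $\sqrt{p}A$ (the paper invokes the Kalman-filtering convergence results of its reference [27] at exactly this point), and the second claim is reduced to a linear recursion whose homogeneous part is $X\mapsto p\,(A-B^P\Omega^{-1}L)X(A-B^P\Omega^{-1}L)'$ driven by a convergent forcing term built from $\Sigma^P_{k|k}$, so that $\sqrt{p}\,|\lambda_{max}(A-B^P\Omega^{-1}L)|<1$ is precisely the contraction condition. The only cosmetic difference is that you propagate the difference covariance $D_k=\Sigma^W_{k|k}-\Sigma^P_{k|k}$, whereas the paper writes the recursion for $\Sigma^W_{k|k}$ itself, using the same orthogonality fact $\mathbb{E}[(x_k-\hat x^P_{k|k})(\hat x^P_{k|k}-\hat x^W_{k|k})']=0$ to evaluate the cross terms.
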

\begin{proof}
With (\ref{12}), we have
\begin{align}
\Sigma_{k|k}^P&=\sqrt{p}(I-G^P_{k|k-1}H)\Sigma^P_{k|k-1}(I-G^P_{k|k-1}H)'\sqrt{p}\nonumber\\
                          &\quad+\sqrt{p}G^P_{k|k-1}Q_vG^{P'}_{k|k-1}\sqrt{p}.\nonumber
\end{align}
Combining the above equation with (\ref{11}), it yields
\begin{align}
\nonumber\Sigma_{k+1|k}^P&=A\Sigma_{k|k}^PA'\\
\nonumber                 &=\sqrt{p}A(I-G^P_{k|k-1}H)\Sigma^P_{k|k-1}(I-G^P_{k|k-1}H)'A'\sqrt{p}\nonumber\\
                          &\quad+\sqrt{p}AG^P_{k|k-1}Q_vG^{P'}_{k|k-1}A'\sqrt{p}.\nonumber
\end{align}
Under Assumption 2, following the results of \cite{R27}, it can be obtained that $\lim_{k\to\infty}\Sigma_{k+1|k}^P=\tilde{\Sigma}^P$. Accordingly, we have that $\lim_{k\to\infty}\Sigma_{k|k}^P$ is convergent, i.e., $\lim_{k\to\infty}\Sigma_{k|k}^P=\Sigma^P$.

Using (\ref{9}), (\ref{28}) and (\ref{38}), we get
\begin{align}
\nonumber &x_k-\hat{x}_{k|k}^W\\
\nonumber&=(1-\gamma_k)(x_k-A\hat{x}_{k-1|k-1}^W-Bu_{k-1})\\
\nonumber&=(1-\gamma_k)[A(x_{k-1}-\hat{x}_{k-1|k-1}^W)+B^P\tilde{u}_{k-1}^P]\\
\nonumber&=(1-\gamma_k)[A(x_{k-1}-\hat{x}_{k-1|k-1}^W)\\
\nonumber&\qquad\qquad\quad-B^P\Omega^{-1}L(\hat{x}_{k-1|k-1}^P-\hat{x}_{k-1|k-1}^W)]\\
\nonumber&=(1\hspace{-0.8mm}-\hspace{-0.8mm}\gamma_k)[A(x_{k-1}\hspace{-0.8mm}-\hspace{-0.8mm}\hat{x}_{k-1|k-1}^W)\hspace{-0.8mm}+\hspace{-0.8mm}B^P\Omega^{-1}L(x_{k-1}\hspace{-0.8mm}-\hspace{-0.8mm}\hat{x}_{k-1|k-1}^P)\\
\nonumber&\qquad\qquad\quad-B^P\Omega^{-1}L(x_{k-1}-\hat{x}_{k-1|k-1}^W)].
\end{align}
Then the estimation error covariance $\Sigma_{k|k}^W$ can be calculated as
\begin{align}
\nonumber\Sigma_{k|k}^W&=p(A\Sigma_{k-1|k-1}^WA'\hspace{-0.8mm}+\hspace{-0.8mm}A\Sigma_{k-1|k-1}^PL'\Omega^{-1}B^{P'}\hspace{-0.8mm}-\hspace{-0.8mm}A\Sigma_{k-1|k-1}^W\\
\nonumber              &\qquad\times L'\Omega^{-1}B^{P'}\hspace{-0.8mm}+\hspace{-0.8mm}B^P\Omega^{-1}L\Sigma_{k-1|k-1}^PA'\hspace{-0.8mm}-\hspace{-0.8mm}B^P\Omega^{-1}L\\
\nonumber&\qquad\times\Sigma_{k-1|k-1}^WA'-B^P\Omega^{-1}L\Sigma_{k-1|k-1}^PL'\Omega^{-1}B^{P'}\\
\nonumber&\qquad+B^P\Omega^{-1}L\Sigma_{k-1|k-1}^WL'\Omega^{-1}B^{P'})\\
\nonumber&=\sqrt{p}(A-B^P\Omega^{-1}L)\Sigma_{k-1|k-1}^W(A-B^P\Omega^{-1}L)'\sqrt{p}\\
\nonumber&\qquad+p[B^P\Omega^{-1}L\Sigma_{k-1|k-1}^P(A-B^P\Omega^{-1}L)'\\
\nonumber&\qquad\qquad+A\Sigma_{k-1|k-1}^PL'\Omega^{-1}B^{P'}].
\end{align}
It is noted that $\lim_{k\to\infty}\Sigma_{k|k}^P=\Sigma^P$ under the Assumption 2. Thus it can be derived from the above equation that $\lim_{k\to\infty}\Sigma_{k|k}^W=\Sigma^W$ when $\sqrt{p}|\lambda_{max}(A-B^P\Omega^{-1}L)|<1$. This completes the proof of Lemma 2.
\end{proof}
\begin{theorem}
Under Assumptions 1 and 2, if the system (\ref{28}) is stabilizable in the mean-square sense, then the following algebraic Riccati equations (\ref{30}) and (\ref{31}) admit the solutions $P^W$ and $P^P$ satisfying $P^W>0$ and $\Delta>0$:
\begin{align}
P^W&=A'P^WA-M'\Gamma^{-1}M+Q,\label{30}\\
P^P&=A'\Delta A-L'\Omega^{-1}L+Q,\label{31}
\end{align}
where
\begin{align}
\Gamma&=B'P^WB+R,\label{32}\\
M&=B'P^WA,\label{33}\\
\Omega&=B^{P'}\Delta B^P+R^P,\label{34}\\
L&=B^{P'}\Delta A,\label{35}\\
\Delta&=(1-p)P^W+pP^P.\label{36}
\end{align}
\end{theorem}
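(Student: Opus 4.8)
The plan is to prove this necessity result by the classical route of representing each algebraic Riccati solution as the limit of the corresponding finite-horizon Riccati sequence from Theorem~1, and showing that stabilizability forces this sequence to be monotone and uniformly bounded, hence convergent. The essential structural observation I would exploit is that the two finite-horizon recursions decouple in one direction: (24) together with (19)--(20) determines $P_k^W$ from $P_{k+1}^W$ alone, whereas (25) with (21)--(23) determines $P_k^P$ from the combination $\Delta_{k+1}=(1-p)P_{k+1}^W+pP_{k+1}^P$. Accordingly I would first settle the sequence $\{P_k^W\}$ and only then treat $\{P_k^P\}$ as driven by the already-resolved $P^W$.

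For the $P^W$ part I would fix the terminal penalty $P_{N+1}=0$ and denote by $P_0^W(N)$ the value of (24) propagated back over a horizon of length $N$; by time invariance of (28) this depends only on $N$. Equation (24) coincides in form with the standard deterministic LQ Riccati recursion for the augmented control $u_k$ with $B=[\,B^W\ B^P\,]$ and $R=\mathrm{diag}(R^W,R^P)$, so two classical facts apply: appending one more stage to the nonnegative running cost in (29) makes the optimal cost-to-go nondecreasing, giving $P_0^W(N+1)\ge P_0^W(N)$ in the positive semidefinite order; and the existence of a mean-square stabilizing policy (Definition~2) yields, along that policy, a finite infinite-horizon cost, which (probing over a basis of initial conditions) bounds $P_0^W(N)$ from above uniformly in $N$. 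A monotone, uniformly bounded sequence of symmetric matrices converges, so $P_0^W(N)\to P^W$, and passing to the limit in (24) and (19)--(20) shows $P^W$ solves (30); observability of $(A,Q^{1/2})$ in Assumption~2 upgrades $P^W\ge0$ to $P^W>0$.

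With $P^W$ fixed, I would eliminate $P^P$ in favour of $\Delta$: using (21)--(23), the recursion for $\Delta_k$ becomes a single packet-loss-weighted (``modified'') Riccati map $\Delta_k=(1-p)P^W+p\,[\,A'\Delta_{k+1}A-A'\Delta_{k+1}B^P(B^{P'}\Delta_{k+1}B^P+R^P)^{-1}B^{P'}\Delta_{k+1}A+Q\,]$, to which the same monotonicity-plus-boundedness scheme applies once the stabilizing correction $\tilde u_k^P=L^P(\hat x_{k|k}^P-\hat x_{k|k}^W)$ is used to bound the $\tilde u^P$-part of the cost; the limit $\Delta$ then yields $P^P$ through (25). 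Finiteness and convergence of the estimation-error contributions in (26) are guaranteed separately by Lemma~3. Positive definiteness of $\Delta$ is then immediate, since $P^P\ge0$ as a limit of positive semidefinite matrices and $0\le p<1$ give $\Delta=(1-p)P^W+pP^P\ge(1-p)P^W>0$.

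The main obstacle I anticipate is the $P^P$ (equivalently $\Delta$) step: because (25) is a Riccati map applied to the convex combination $\Delta$ rather than to $P^P$ itself, I must verify that monotonicity and uniform boundedness genuinely propagate through this coupled, loss-weighted recursion, and in particular that the scalar bound supplied by a single stabilizing policy separately controls the remote cost-to-go and the local-correction cost-to-go rather than only their aggregate. Confirming that the limit $\Delta$ indeed coincides with $(1-p)P^W+pP^P$ for the $P^P$ extracted from (25)---i.e.\ that the two fixed points are mutually consistent---is the delicate point, and I expect to handle it by carrying the joint monotonicity of the pair $(P_k^W,P_k^P)$ through the cost-to-go interpretation of Theorem~1 rather than analysing the two recursions in isolation.
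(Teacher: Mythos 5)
Your proposal is essentially correct and rests on the same engine as the paper's Appendix~B --- represent the algebraic Riccati solutions as limits of the finite-horizon sequences from Theorem~1, prove monotonicity and uniform boundedness via the optimal-cost interpretation, and use observability of $(A,Q^{1/2})$ for strict positivity --- but the decomposition is genuinely different. You settle $P^W$ first as a standalone standard LQ Riccati limit and then treat $\Delta$ as a loss-weighted modified Riccati driven by $P^W$; the paper instead normalizes the problem ($H=I$, $v_k=0$, $\mu=0$, no additive noise) so that the optimal cost collapses to $J_N^*=\mathbb{E}[x_0'\Delta_0(N)x_0]$, establishes monotone boundedness of the \emph{aggregate} $\Delta_0(N)$ directly, and only afterwards recovers $P^W$ by the observation that $P_k^W(N)$ does not depend on $p$ (so setting $p=0$ makes $\Delta_k(N)=P_k^W(N)$) and recovers $P^P$ from $\Delta=(1-p)P^W+pP^P$. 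The paper's route buys exactly the thing you flag as delicate: the stabilizing policy only bounds the aggregate cost, and the aggregate is $\Delta_0(N)$, so no separate control of the remote and local cost-to-go components is ever needed. Two caveats on your version: (i) the uniform bound on $P_0^W(N)$ alone does not follow from ``standard deterministic LQ theory'' unless you separately establish $(A,B)$-stabilizability, which mean-square stabilizability of (\ref{28}) does not obviously imply; you must extract it from the aggregate bound via $(1-p)P_0^W(N)\le\Delta_0(N)$ (using $P_0^P(N)\ge0$ and $p<1$) or via the paper's $p$-independence trick; (ii) your shortcut $\Delta\ge(1-p)P^W>0$ also requires $p<1$, whereas the paper proves $\Delta>0$ (and $P^W>0$) by a contradiction argument --- a nonzero $x_0$ with $\mathbb{E}[x_0'\Delta_0(N)x_0]=0$ would force $u_k^*=0$, $\tilde u_k^{P*}=0$, $Dx_k^*=0$ and hence $x_0=0$ by observability --- which does not need that restriction. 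Your closing plan to carry the joint monotonicity of the pair through the cost-to-go interpretation is precisely the paper's device, so with these two points repaired the argument goes through.
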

\begin{proof}
See Appendix B.
\end{proof}
\begin{theorem}
Under Assumptions 1 and 2,  the system (\ref{28}) is stabilizable in the mean-square sense if and only if there exist solutions $P^W$ and $P^P$ to the algebraic Riccati equations (\ref{30}) and (\ref{31}) satisfying $P^W>0$ and $\Delta>0$.

In this case, the stabilizing controllers
\begin{align}
u_k&=-\Gamma^{-1}M\hat{x}_{k|k}^W,\label{37}\\
\tilde{u}_k^P&=-\Omega^{-1}L(\hat{x}_{k|k}^P-\hat{x}_{k|k}^W),\label{38}
\end{align}
also minimize the performance (\ref{29}). The optimal performance is given by
\begin{align}
\nonumber J^*&=\hspace{-0.8mm}\mathbb{E}[x_0'P^W\hat{x}_{0|0}^W\hspace{-0.8mm}+\hspace{-0.8mm}x_0'P^P(\hat{x}_{0|0}^P\hspace{-0.8mm}-\hspace{-0.8mm}\hat{x}_{0|0}^W)]\hspace{-0.8mm}+\hspace{-0.8mm}tr\hspace{-0.8mm}\sum_{i=0}^\infty\{\Sigma_{i|i}^P[(A'\Delta A\\
\nonumber&\quad\hspace{-0.8mm}+Q-(A-G_{i+1|i}^PHA)'P^P(A-G_{i+1|i}^PHA)]\\
&\quad-pQ_vG_{k+1|k}^{P'}P_{k+1}^PG_{k+1|k}^P\}\label{39}
\end{align}
\end{theorem}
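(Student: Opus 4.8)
The necessity (``only if'') direction is precisely Theorem 2, so the task reduces to proving sufficiency, the optimality of (\ref{37})--(\ref{38}), and the cost identity (\ref{39}). My plan is to obtain all three by letting the horizon $N\to\infty$ in the finite-horizon solution of Theorem 1 and by using the stationary Riccati solutions $P^W,P^P$ of (\ref{30})--(\ref{31}) to build a Lyapunov function. As a preliminary step I would show that the finite-horizon iterates $P_k^W,P_k^P$ produced by (\ref{24})--(\ref{25}) from the zero terminal penalty are monotone nondecreasing as the horizon grows and are dominated by the assumed stationary solution; Assumption 2 (observability of $(A,Q^{1/2})$) then pins down the limits $P_k^W\to P^W$ and $P_k^P\to P^P$, whence $\Gamma_k\to\Gamma$, $M_k\to M$, $\Omega_k\to\Omega$, $L_k\to L$ and the finite-horizon laws (\ref{17})--(\ref{18}) converge to the stationary laws (\ref{37})--(\ref{38}).

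For the stabilization claim I would take
\begin{align}
V(x_k)=\mathbb{E}\big[x_k'P^W\hat{x}_{k|k}^W+x_k'P^P(\hat{x}_{k|k}^P-\hat{x}_{k|k}^W)\big]\nonumber
\end{align}
as the candidate Lyapunov function. Using the tower property together with $\mathbb{E}[\hat{x}_{k|k}^P\,|\,\mathcal{F}\{Y^W_k\}]=\hat{x}_{k|k}^W$, this rewrites as $V(x_k)=\mathbb{E}[\hat{x}_{k|k}^{W'}P^W\hat{x}_{k|k}^W]+\mathbb{E}[(\hat{x}_{k|k}^P-\hat{x}_{k|k}^W)'P^P(\hat{x}_{k|k}^P-\hat{x}_{k|k}^W)]\geq0$, where nonnegativity uses $P^W>0$ and $\Delta>0$. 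Propagating the estimators (\ref{9})--(\ref{10}) through the closed loop (\ref{28}) under (\ref{37})--(\ref{38}) and collapsing with the coupled identities (\ref{30}), (\ref{31}) and the definition (\ref{36}) of $\Delta$, I expect the one-step relation
\begin{align}
V(x_k)-V(x_{k+1})=\mathbb{E}\big[x_k'Qx_k+u_k'Ru_k+\tilde{u}_k^{P'}R^P\tilde{u}_k^P\big]-T_k,\nonumber
\end{align}
where $T_k$ aggregates the deterministic trace contributions of the measurement noise $v_k$ and the dropout variable $\beta_k$ that appear in the summand of (\ref{39}). The decisive subsidiary fact is that $A-B\Gamma^{-1}M$ is Schur (standard from (\ref{30}) under Assumption 2) and that $\sqrt{p}\,|\lambda_{\max}(A-B^P\Omega^{-1}L)|<1$, which I would deduce from (\ref{31}) together with $\Delta>0$; the latter is exactly the hypothesis Lemma 3 needs for $\Sigma_{k|k}^W$ to converge. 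Summing the one-step relation and invoking $V\geq0$ forces the stage cost to vanish, hence $\mathbb{E}[\hat{x}_{k|k}^{W'}\hat{x}_{k|k}^W]\to0$ and $\mathbb{E}[(\hat{x}_{k|k}^P-\hat{x}_{k|k}^W)'(\hat{x}_{k|k}^P-\hat{x}_{k|k}^W)]\to0$; combined with the convergence of $\Sigma_{k|k}^W$ from Lemma 3 this yields $\lim_k\mathbb{E}[x_k'x_k]=0$, i.e. mean-square stability.

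For optimality I would run the same completion-of-squares identity for an arbitrary admissible pair $(u_k,\tilde{u}_k^P)$ rather than the stationary one; the residual cross terms produce $+\,\mathbb{E}[(u_k-u_k^\ast)'\Gamma(u_k-u_k^\ast)+(\tilde{u}_k^P-\tilde{u}_k^{P\ast})'\Omega(\tilde{u}_k^P-\tilde{u}_k^{P\ast})]\geq0$, so telescoping from $0$ to $\infty$ and using mean-square stability to discard the terminal term shows the cost is minimized exactly by (\ref{37})--(\ref{38}). The explicit value (\ref{39}) then follows either by evaluating $V(x_0)+\sum_k T_k$ or, equivalently, by passing the finite-horizon optimal cost (\ref{26}) to the limit with $P_k^W\to P^W$, $P_k^P\to P^P$ and $\Sigma_{k|k}^P\to\Sigma^P$. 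I expect the main obstacle to be the derivation of the one-step Lyapunov identity in the closed loop: because the two controllers act on different $\sigma$-algebras, the propagation of $V$ intertwines both estimators, the jump of $\hat{x}^W$ to $x_{k+1}$ when $\beta_{k+1}=1$, and the measurement noise $v_k$, and reducing all of these to the stage cost plus the precise trace terms $T_k$ of (\ref{39}) through the coupled equations (\ref{30})--(\ref{36}) is the delicate computation; a secondary difficulty is extracting $\sqrt{p}\,|\lambda_{\max}(A-B^P\Omega^{-1}L)|<1$ from the solvability of (\ref{31}) so that Lemma 3 applies.
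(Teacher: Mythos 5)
Your overall architecture --- necessity delegated to Theorem 2, sufficiency via a Lyapunov function built from the stationary Riccati solutions, completion of squares for optimality, and passage of the finite-horizon cost to the limit for (\ref{39}) --- is the same as the paper's (Appendix C). But one step fails as written. You take the ``head-only'' function $V(x_k)=\mathbb{E}[x_k'P^W\hat{x}^W_{k|k}+x_k'P^P(\hat{x}^P_{k|k}-\hat{x}^W_{k|k})]$, obtain $V(x_k)-V(x_{k+1})=\mathbb{E}[x_k'Qx_k+u_k'Ru_k+\tilde{u}_k^{P'}R^P\tilde{u}_k^P]-T_k$, and then assert that summing and invoking $V\geq 0$ forces the stage cost to vanish. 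That inference requires $\sum_kT_k<\infty$. The $T_k$ are the trace terms built from $\Sigma^P_{k|k}$, $G^P_{k+1|k}$ and $Q_v$, and the measurement noise $v_k$ is still present in (\ref{2}) even though the plant (\ref{28}) has no process noise; the fact you import from Lemma 3 is only $\Sigma^P_{k|k}\to\Sigma^P$, which is generically nonzero. If $T_k\to T\neq 0$, then $V$ is not monotone and the telescoped identity only shows that the running cost asymptotically tracks $T_k$, not that it vanishes. The paper sidesteps this by folding the entire infinite tail $\sum_{i=k}^{\infty}\{\cdots\}$ of these trace terms into the Lyapunov function (its (\ref{B.1})), so that the one-step decrement is \emph{exactly} the stage cost and monotonicity plus nonnegativity give convergence; implicitly this still requires the tail to be finite, i.e.\ that with $Q_\omega=0$ the filter error covariance decays summably. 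To repair your argument you must either adopt the augmented Lyapunov function or separately prove $\sum_kT_k<\infty$ (equivalently, $\Sigma^P_{k|k}\to 0$ at a summable rate in the noise-free-plant case under Assumption 2); neither follows from what you wrote.

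A second, related gap is the final passage from the vanishing stage cost to $\lim_k\mathbb{E}[x_k'x_k]=0$. Since $\mathbb{E}[x_k'x_k]=\mathbb{E}[\hat{x}^{W'}_{k|k}\hat{x}^W_{k|k}]+\mathrm{tr}(\Sigma^W_{k|k})$, mere \emph{convergence} of $\Sigma^W_{k|k}$ (all Lemma 3 provides) is not enough --- you need $\Sigma^W=0$; and extracting $\mathbb{E}[\hat{x}^{W'}_{k|k}\hat{x}^W_{k|k}]\to 0$ from $x_k'Qx_k+u_k'Ru_k\to 0$ needs the observability of $(A,Q^{1/2})$, since $u_k'Ru_k\to 0$ only kills $M\hat{x}^W_{k|k}$. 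The paper's proof of Theorem 3 does not route through Lemma 3 or the spectral condition $\sqrt{p}\,|\lambda_{\max}(A-B^P\Omega^{-1}L)|<1$ at all (that condition is a hypothesis only of Theorem 4): instead it lower-bounds the windowed cost $\sum_{k=m}^{m+N}\mathbb{E}[\cdots]$ by the finite-horizon optimal cost started at $x_m$, which equals $\mathbb{E}[x_m'\Delta_0(N)x_m]$ after its normalizations, and then uses $\Delta>0$ to conclude $\mathbb{E}[x_m'x_m]\to 0$. Your derivation of the spectral condition from (\ref{31}) with $\Delta>0$ is sound but, in the paper's scheme, unnecessary for this theorem. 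The optimality and cost-formula parts of your proposal match the paper's completion-of-squares argument and are fine once the Lyapunov bookkeeping above is fixed.
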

\begin{proof}
See Appendix C.
\end{proof}
\begin{remark}
It is noted that the key of deriving the stabilization condition is to define the Lyapunov function (\ref{B.1}) which is more complicated than \cite{R18}.
\end{remark}
Now we shall show the other statement of the stabilization condition for the system (\ref{28}). Firstly, we give the following assumptions:
\begin{assumption}
$\left(A, \begin{bmatrix}B^W&B^P\end{bmatrix}\right)$ is stabilizable.
\end{assumption}
\begin{assumption}
($A, B^P$) is stabilizable and ($A,D$) is observable where $pQ+(1-p)P^W=DD'$.
\end{assumption}
\begin{lemma}
Under Assumptions 1-4, the coupled algebraic Riccati equations (\ref{30}) and (\ref{31})
admit the unique solutions $P^W$ and $P^P$ such that $P^W>0$ and $\Delta>0$.
\end{lemma}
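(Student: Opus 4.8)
The plan is to exploit a one-directional coupling: equation (\ref{30}) does not involve $P^P$ at all, since $\Gamma=B'P^WB+R$ and $M=B'P^WA$ depend only on $P^W$. Hence I would first treat (\ref{30}) as the self-contained DARE
\[
P^W=A'P^WA-A'P^WB(B'P^WB+R)^{-1}B'P^WA+Q,
\]
which is the standard algebraic Riccati equation for the pair $(A,B)$ with control weight $R=\mathrm{diag}(R^W,R^P)>0$ (Assumption 1) and state weight $Q\ge0$. Under Assumption 3 the pair $(A,B)$ is stabilizable, and under Assumption 2 the pair $(A,Q^{1/2})$ is observable; classical DARE theory then gives a unique positive semidefinite stabilizing solution, which observability promotes to $P^W>0$. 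This fixes $P^W$ before anything else is attempted, which is essential because Assumption 4 refers to $P^W$.

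With $P^W$ in hand I would not solve for $P^P$ directly but for $\Delta=(1-p)P^W+pP^P$. Multiplying (\ref{31}) by $p$ and adding $(1-p)P^W$, and substituting $L=B^{P'}\Delta A$ and $\Omega=B^{P'}\Delta B^P+R^P$, yields the closed equation
\[
\Delta=pA'\Delta A-pA'\Delta B^P(B^{P'}\Delta B^P+R^P)^{-1}B^{P'}\Delta A+\big[(1-p)P^W+pQ\big].
\]
The crucial observation is that, with state matrix $\sqrt{p}A$, this is exactly the standard DARE
\[
\Delta=(\sqrt{p}A)'\Delta(\sqrt{p}A)-(\sqrt{p}A)'\Delta B^P(B^{P'}\Delta B^P+R^P)^{-1}B^{P'}\Delta(\sqrt{p}A)+DD',
\]
for the pair $(\sqrt{p}A,B^P)$ with control weight $R^P>0$ and state weight $DD'=pQ+(1-p)P^W\ge0$ as in Assumption 4 (well defined since $P^W$ is already determined).

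It then remains to transfer the structural hypotheses of Assumption 4 from $A$ to $\sqrt{p}A$ and invoke DARE theory once more. Because $0<p<1$, scaling the state matrix by $\sqrt{p}$ multiplies each block of the controllability and observability matrices by a nonzero scalar, so $(\sqrt{p}A,D)$ inherits the observability of $(A,D)$. For stabilizability, a PBH argument shows that any eigenvalue $\mu=\sqrt{p}\lambda$ of $\sqrt{p}A$ with $|\mu|\ge1$ forces $|\lambda|\ge1/\sqrt{p}\ge1$, and at such $\lambda$ the matrix $[\,\sqrt{p}(A-\lambda I)\ \ B^P\,]$ has the same rank as $[\,A-\lambda I\ \ B^P\,]$, which is full since $(A,B^P)$ is stabilizable; hence $(\sqrt{p}A,B^P)$ is stabilizable. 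Classical DARE theory therefore delivers a unique positive semidefinite stabilizing $\Delta$, and observability promotes it to $\Delta>0$. Finally, since $p>0$, setting $P^P=\tfrac1p\big[\Delta-(1-p)P^W\big]$ recovers a solution of (\ref{31}); conversely any admissible $(P^W,P^P)$ produces the corresponding DARE solutions $P^W$ and $\Delta$, so the bijection between the two solution sets transfers uniqueness, giving the unique pair with $P^W>0$ and $\Delta>0$.

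The main obstacle I anticipate is the stabilizability transfer from $(A,B^P)$ to $(\sqrt{p}A,B^P)$: unlike controllability, stabilizability need not survive a rescaling of the state matrix, and its preservation here relies essentially on the dropout probability satisfying $p<1$, so that the scaling can only pull unstable modes inward. A secondary point requiring care is the logical ordering forced by the one-directional coupling, since Assumption 4 involves $P^W$ and may be used only after $P^W$ has been produced in the first step. As an alternative route one could instead pass to the limit in the finite-horizon recursions (\ref{24})--(\ref{25}) and argue monotone convergence, but the DARE reduction above is more direct.
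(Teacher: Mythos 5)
Your proposal is correct and follows essentially the same route as the paper: solve (\ref{30}) as a self-contained standard DARE for $P^W$ under Assumptions 1--3, then use $\Delta=(1-p)P^W+pP^P$ to rewrite (\ref{31}) as a standard DARE for $\Delta$ with state matrix $\sqrt{p}A$, input $B^P$, and state weight $DD'=pQ+(1-p)P^W$, invoking Assumption 4 and recovering $P^P=\frac{1}{p}[\Delta-(1-p)P^W]$. Your PBH argument transferring stabilizability and observability from $(A,B^P)$ and $(A,D)$ to the $\sqrt{p}$-scaled pairs is a detail the paper leaves implicit (it cites the conditions in terms of $A$ directly), so your write-up is, if anything, slightly more complete on that point.
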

\begin{proof}
Since the algebraic Riccati equation (\ref{30}) is the standard Riccati equation, under Assumptions 1 and 2, the proof of the uniqueness of $P^W$ can be found in \cite{R28}. Here we show the uniqueness of $P^P>0$ in (\ref{31}). Under Assumption 1, applying (\ref{36}), it yields that
\begin{align}
P^P=\frac{\Delta-(1-p)P^W}{p}.\label{40}
\end{align}
Using (\ref{31}), (\ref{35}) and (\ref{40}), it yields that
\begin{align}
\nonumber \frac{\Delta-(1-p)P^W}{p}&=A'\Delta A-A'\Delta B^P\Omega^{-1}B^{P'}\Delta A+Q.
\end{align}
Accordingly, we have
\begin{align}
\nonumber \Delta&=\sqrt{p}A'Q\sqrt{p}A-\sqrt{p}A'\Delta B^P\Omega^{-1}B^{P'}\Delta\sqrt{p}A+[pQ\\
\nonumber    &\quad+(1-p)P^W].
\end{align}
Noting \cite{R28}, if ($A, B^P$) is stabilizable and ($A,D$) is observable where $pQ+(1-p)P^W=DD'$, there exists the unique solution $\Delta>0$. Observing (\ref{36}), it is readily obtained that (\ref{31}) admits the unique solution $P^P$. This completes the proof.
\end{proof}
We now are ready to restate the stabilization condition of Theorem 3 as follows.
\begin{coro}
Under Assumptions 1-4, the system (\ref{28}) is stabilizable in the mean-square sense.
\end{coro}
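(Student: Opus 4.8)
The plan is to obtain the corollary by directly chaining the sufficiency direction of Theorem 3 with the solvability result of Lemma 4, so no new Lyapunov or estimation argument is required. The first observation is that Assumptions 1--4 are strictly stronger than Assumptions 1 and 2, so the hypotheses under which Theorem 3 was established hold automatically. In particular, the characterization furnished by Theorem 3---that stabilizability in the mean-square sense is equivalent to the existence of solutions $P^W$ and $P^P$ to the coupled algebraic Riccati equations (\ref{30}) and (\ref{31}) satisfying $P^W>0$ and $\Delta>0$---is available for use.

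Next I would invoke Lemma 4, which asserts precisely that under Assumptions 1--4 the coupled algebraic Riccati equations (\ref{30}) and (\ref{31}) do admit (unique) solutions $P^W$ and $P^P$ with $P^W>0$ and $\Delta>0$. Since these are the same equations and the same positivity qualifications that appear in the statement of Theorem 3, the existence side of the equivalence is met, and the ``if'' part of Theorem 3 applies verbatim. I would then conclude that the closed-loop system (\ref{28}) driven by the stabilizing controllers (\ref{37})--(\ref{38}) is asymptotically mean-square stable, i.e., system (\ref{28}) is stabilizable in the mean-square sense.

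The only point genuinely worth checking---and really the sole thing that could go wrong---is the compatibility of the hypotheses: one must confirm that the positivity conditions delivered by Lemma 4 ($P^W>0$ and $\Delta>0$) are exactly the ones demanded by the sufficiency half of Theorem 3, and that imposing the additional stabilizability and observability requirements of Assumptions 3 and 4 does not conflict with the standing Assumptions 1 and 2. Because all the substantive work---deriving the equivalence in Theorem 3 and establishing solvability of the coupled Riccati equations in Lemma 4---has already been completed, the corollary is an immediate specialization obtained by combining these two results, and I do not expect any further obstacle.
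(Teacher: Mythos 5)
Your proposal is correct and follows exactly the paper's own argument: invoke Lemma 4 (under Assumptions 1--4) to obtain solutions $P^W>0$ and $\Delta>0$ of the coupled algebraic Riccati equations, then apply the sufficiency direction of Theorem 3 to conclude mean-square stabilizability. No further comment is needed.
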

\begin{proof}
Under Assumptions 3 and 4, noting Lemma 4, it can be known that the algebraic Riccati equations (\ref{30}) and (\ref{31}) admit the unique solutions $P^W$ and $P^P$ such that $P^W>0$ and $\Delta>0$. Thus, under Assumptions 1-4 and from Theorem 3, it is readily obtained that the system (\ref{28}) is stabilizable in the mean-square sense.
\end{proof}
\subsection{Boundedness in the Mean-Square Sense}
In this subsection, we shall show the stabilization condition for the system (\ref{7}).
\begin{remark}
It is noted that for the single-control system with the additive noise, merely the sufficient condition for the stabilization problem can be derived \cite{R29}. The necessary and sufficient stabilization condition is still unsolved. In the following subsection, we shall present the complete solution to the stabilization problem for the system with multiple controllers and additive noise.
\end{remark}
The associate infinite-horizon performance for the system (\ref{7}) is given by
\begin{align}
\tilde{J}\hspace{-0.8mm}&=\hspace{-0.8mm}\lim_{N\to\infty}\hspace{-0.8mm}\frac{1}{N}\bigg\{\mathbb{E}\sum_{k=0}^N\bigg[{x_k}'Qx_k\hspace{-0.8mm}+\hspace{-0.8mm}u_k'Ru_k\hspace{-0.8mm}+\hspace{-0.8mm}\tilde{u}_k^{P'}R^P\tilde{u}_k^P\bigg]\bigg\}.\label{41}
\end{align}
We give the problem to be solved in this subsection as follows:
\begin{problem}
Search the $\mathcal{F}\{Y_k^W\}$-measurable controller $u_k$ and the $\mathcal{F}\{Y_k^W,Y_k^P\}$-measurable controller $\tilde{u}_k^P$ such that the system (\ref{7}) is bounded in the mean-square sense and the infinite-horizon performance (\ref{41}) is minimized.
\end{problem}
Before give the main results of this subsection, we shall present the convergence of the estimators for the embedded controller and remote controller.
\begin{lemma}
Under Assumption 2, the estimation error covariances $\Sigma_{k|k}^P$ is asymptotic bounded, i.e., $\lim_{k\to\infty}\Sigma_{k|k}^P=\Sigma^P$. Under Assumption 2, if $\sqrt{p}|\lambda_{max}(A\hspace{-0.8mm}-\hspace{-0.8mm}B^P\Omega^{-1}L)|<1$, then $\Sigma_{k|k}^W$ is asymptotic bounded, i.e., $\lim_{k\to\infty}\Sigma_{k|k}^W=\Sigma^W$.
\end{lemma}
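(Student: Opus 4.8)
The plan is to mirror the proof of Lemma 3, the key observation being that the two estimation-error covariance recursions are governed only by the system matrices, the second-order noise statistics $Q_\omega,Q_v$, and the dropout probability $p$; they are independent of the actual control inputs and of the noise realizations. Consequently the additive term $\omega_k$ in the system (\ref{7}) alters the recursions only by contributing the constant $Q_\omega$ to the one-step prediction covariance and an analogous constant to the remote-estimator error covariance, so the convergence arguments used for Lemma 3 transfer with only these two constants added.

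For the first claim I would chain the relations (\ref{11}) and (\ref{12}) with the expression for $\Sigma^{PP}_{k|k}$ from Lemma 1 to obtain a closed recursion for the prediction covariance of the form
\begin{align}
\Sigma^P_{k+1|k}&=\sqrt{p}\,A(I-G^P_{k|k-1}H)\Sigma^P_{k|k-1}(I-G^P_{k|k-1}H)'A'\sqrt{p}\nonumber\\
&\quad+\sqrt{p}\,AG^P_{k|k-1}Q_vG^{P'}_{k|k-1}A'\sqrt{p},\nonumber
\end{align}
which is a Riccati-type iteration scaled by the dropout factor $\sqrt{p}$. Under Assumption 2, invoking the intermittent-observation Kalman filtering convergence result \cite{R27} yields $\lim_{k\to\infty}\Sigma^P_{k+1|k}=\tilde{\Sigma}^P$, and therefore $\Sigma^P_{k|k}$ is asymptotically bounded with $\lim_{k\to\infty}\Sigma^P_{k|k}=\Sigma^P$.

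For the second claim I would compute the remote-estimator error $x_k-\hat{x}^W_{k|k}$ from (\ref{9}), the dynamics (\ref{7}) and the feedback law (\ref{38}); relative to Lemma 3 this error now carries the extra driving term $(1-\gamma_k)\omega_{k-1}$. Forming the covariance and taking expectations, the recursion for $\Sigma^W_{k|k}$ separates into a homogeneous Lyapunov operator $\sqrt{p}(A-B^P\Omega^{-1}L)(\cdot)(A-B^P\Omega^{-1}L)'\sqrt{p}$ acting on $\Sigma^W_{k-1|k-1}$, plus an inhomogeneous part assembled from $\Sigma^P_{k-1|k-1}$ and the constant $pQ_\omega$. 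Since the first claim gives $\Sigma^P_{k-1|k-1}\to\Sigma^P$, the inhomogeneous part converges, and the resulting affine matrix recursion converges precisely when the Lyapunov operator is a contraction, i.e. under the stated condition $\sqrt{p}|\lambda_{max}(A-B^P\Omega^{-1}L)|<1$, giving $\lim_{k\to\infty}\Sigma^W_{k|k}=\Sigma^W$.

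The main obstacle is the first claim: one must verify that the dropout-scaled Riccati iteration satisfies the hypotheses needed to apply the convergence theorem \cite{R27}, since the $\sqrt{p}$ scaling and the intermittent observations depart from the classical Kalman setting and the relevant detectability condition must be read off from Assumption 2. By contrast, the second claim is comparatively routine once $\Sigma^P$ is known to converge, being an affine matrix recursion whose asymptotic behaviour is controlled by the single spectral-radius condition; the only new ingredient over Lemma 3, namely the constant $pQ_\omega$ produced by the additive noise, enters the inhomogeneous part and does not affect convergence.
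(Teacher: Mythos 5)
Your proposal is correct and follows essentially the same route as the paper, which simply states that the proof of this lemma is analogous to that of Lemma 3; you correctly identify that the only change is the constant $Q_\omega$ entering the prediction-covariance recursion (and the corresponding constant in the remote-estimator recursion), which turns the homogeneous recursions of Lemma 3 into affine ones without affecting either the appeal to \cite{R27} for the first claim or the contraction argument under $\sqrt{p}|\lambda_{max}(A-B^P\Omega^{-1}L)|<1$ for the second. One minor slip: your displayed recursion for $\Sigma^P_{k+1|k}$ omits the $+Q_\omega$ term required by (\ref{11}), even though you account for it correctly in the surrounding text.
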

\begin{proof}
The proof is similar to that of Lemma 3. Thus is omit here.
\end{proof}
Now we shall show the main results of this subsection.
\begin{theorem}
Under assumption 1 and 2, if $\sqrt{p}|\lambda_{max}(A\hspace{-0.8mm}-\hspace{-0.8mm}B^P\Omega^{-1}L)|<1$, the system (\ref{7}) is bounded in the mean-square sense if and only if there exist solutions $P^W$ and $P^P$ to the algebraic Riccati equations (\ref{30}) and (\ref{31}) such that $P^W>0$ and $\Delta>0$.

Accordingly, the stabilizing controllers are as
\begin{align}
{u}_k&=-\Gamma^{-1}M\hat{x}_{k|k}^W,\label{42}\\
\tilde{u}_k^P&=-\Omega^{-1}L(\hat{x}_{k|k}^P-\hat{x}_{k|k}^W),\label{43}
\end{align}
and the optimal performance is minimized by the above controllers as
\begin{align}
\nonumber \tilde{J}^* &=tr\big\{\Sigma^P[(A'\Delta A\hspace{-0.8mm}+Q-(A-\hspace{-0.8mm}G^PHA)'P^P(A-\hspace{-0.8mm}G^PHA)]\\
\nonumber&\qquad+Q_\omega[(\Delta-p(I-G^PH)'P^P(I-G^PH)]\\
&\qquad-pQ_vG^{P'}P^PG^P\big\}.\label{44}
\end{align}
\end{theorem}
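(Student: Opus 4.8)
The plan is to treat this theorem as the additive-noise counterpart of Theorem 3 and to exploit the fact that the homogeneous part of system (\ref{7}) is exactly system (\ref{28}), closed by the same feedback gains, since (\ref{42})--(\ref{43}) coincide with (\ref{37})--(\ref{38}). Accordingly I would split the equivalence into a sufficiency part (solvability of (\ref{30})--(\ref{31}) $\Rightarrow$ boundedness) and a necessity part (boundedness $\Rightarrow$ solvability), and establish the performance formula (\ref{44}) separately as a Ces\`aro limit of the finite-horizon value (\ref{26}).

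For sufficiency, I would assume (\ref{30})--(\ref{31}) admit $P^W>0$, $\Delta>0$. By Theorem 3 the closed loop of the noise-free system (\ref{28}) under (\ref{37})--(\ref{38}) is asymptotically mean-square stable. I would then write the closed-loop state covariance $X_k=\mathbb{E}(x_kx_k')$ of (\ref{7}), together with the estimation-error covariances $\Sigma_{k|k}^P$ and $\Sigma_{k|k}^W$, as a coupled linear recursion of the form $X_{k+1}=\mathcal{L}(X_k)+(\text{terms in }Q_\omega,Q_v)$, where $\mathcal{L}$ is the same closed-loop Lyapunov operator whose mean-square stability is guaranteed by Theorem 3 together with the condition $\sqrt{p}|\lambda_{\max}(A-B^P\Omega^{-1}L)|<1$. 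Since Lemma 5 already yields $\Sigma_{k|k}^P\to\Sigma^P$ and $\Sigma_{k|k}^W\to\Sigma^W$ while the driving covariances are constant, the stability of $\mathcal{L}$ forces $\sup_k\mathbb{E}(x_k'x_k)<\infty$, which is precisely boundedness in the mean-square sense.

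For the optimal performance, I would start from the finite-horizon optimum (\ref{26}) of Theorem 1 and form $\tilde{J}^*=\lim_{N\to\infty}\frac{1}{N}J_N^*$. The initial-condition term $\mathbb{E}\{x_0'[\cdots]\}$ and the terminal term $\Sigma_{N+1|N+1}^P P_{N+1}$ are $O(1)$ and vanish after division by $N$; the running sum, divided by $N$, converges by the Stolz--Ces\`aro theorem to the limit of its summand, which exists because $P_k^W\to P^W$, $P_k^P\to P^P$, $\Delta_{k+1}\to\Delta$, $G_{k+1|k}^P\to G^P$ and $\Sigma_{k|k}^P\to\Sigma^P$. Substituting the steady-state matrices into the summand of (\ref{26}) then reproduces exactly (\ref{44}), including the $Q_\omega$ contribution absent from the noise-free total cost.

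The necessity direction is the main obstacle. Assuming (\ref{7}) is bounded in the mean-square sense, I would argue that the persistent additive excitation $\omega_k$, of fixed covariance $Q_\omega$, precludes any unstable mode of the homogeneous closed loop: if $\mathcal{L}$ had spectral radius $\geq 1$, the accumulated noise would drive $\mathbb{E}(x_k'x_k)\to\infty$, contradicting boundedness. Hence $\mathcal{L}$ is mean-square stable, so the noise-free system (\ref{28}) is stabilizable in the mean-square sense by the same feedback, and Theorem 3 then supplies the required solutions $P^W>0$, $\Delta>0$ of (\ref{30})--(\ref{31}). The delicate step is that the feedback acts through the estimators $\hat{x}_{k|k}^W$ and $\hat{x}_{k|k}^P$, so the quantity whose boundedness must be transferred into internal stability is the augmented vector $(\hat{x}_{k|k}^W,\ \hat{x}_{k|k}^P-\hat{x}_{k|k}^W,\ x_k-\hat{x}_{k|k}^P)$; I would first disentangle the noise-driven error dynamics (controlled by Lemma 5) from the genuinely homogeneous estimate dynamics before the persistent-excitation argument can be applied cleanly.
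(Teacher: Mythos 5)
Your sufficiency argument and your treatment of the performance are essentially the paper's own route. The paper likewise reduces boundedness to the stability of the deterministic recursion $\beta_{k+1}=(A-B\Gamma^{-1}M)\beta_k$ by expanding $\mathbb{E}(x_{k+1}'x_{k+1})$ into a homogeneous part plus trace terms in $\Sigma^W_{k|k}$, $\Sigma^P_{k|k}$, $Q_\omega$ that converge by Lemma 5 (this is (\ref{D.10})--(\ref{D.3})); the only cosmetic difference is that the paper proves stability of $A-B\Gamma^{-1}M$ directly from the Riccati equation (\ref{30}) rewritten as the Lyapunov identity (\ref{D.4}) rather than by citing Theorem 3. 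For the cost, the paper performs an infinite-horizon completion of squares with a value function $\tilde W_k$ and divides by $N$, rather than taking a Ces\`aro limit of the finite-horizon optimum (\ref{26}); your route is workable but you would still owe an argument that the limit of the finite-horizon optimal costs equals the infinite-horizon optimal average cost, which the direct completion of squares gives you for free.

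The genuine gap is in your necessity direction. The persistent-excitation claim --- ``if the closed-loop Lyapunov operator had spectral radius $\geq 1$ the accumulated noise would drive $\mathbb{E}(x_k'x_k)\to\infty$'' --- requires the additive noise to reach every unstable mode of the closed loop. Nothing in the paper forces $Q_\omega>0$, and the closed loop is a randomly switched system (through $\gamma_k$), so an unstable mode lying outside the subspace reachable from the range of $Q_\omega$ would not be excited and your contradiction would not materialize. The paper avoids this issue entirely by linear superposition: write $x_k=z_k+s_k$, where $z_k$ obeys the noise-free closed loop (\ref{28}) from the initial condition $x_0$ (equation (\ref{D.8})) and $s_k$ obeys the noisy recursion (\ref{D.9}) from zero initial condition; since $z_k$ and $s_k$ are orthogonal, $\mathbb{E}(x_k'x_k)=\mathbb{E}(z_k'z_k)+\mathbb{E}(s_k's_k)$, so mean-square boundedness of (\ref{7}) for every $x_0$ forces the homogeneous system (\ref{28}) to be mean-square stabilizable by the same feedback, and Theorem 2 then yields $P^W>0$ and $\Delta>0$ solving (\ref{30})--(\ref{31}). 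This superposition-plus-orthogonality step is the precise form of the ``disentangling'' you allude to at the end, and it should replace, not precede, the excitation argument.
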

\begin{proof}
See Appendix D.
\end{proof}
\begin{remark}
It should be emphasized that it is the first time to show the strict proof for the necessary and sufficient stabilization condition of LQG control for the system involving with the additive noise.
\end{remark}
We now show the other claim of the stabilization condition for the system (\ref{7}).
\begin{coro}
Under Assumptions 1-4, if $\sqrt{p}|\lambda_{max}(A\hspace{-0.8mm}-\hspace{-0.8mm}B^P\Omega^{-1}L)|<1$, the system (\ref{28}) is bounded in the mean-square sense.
\end{coro}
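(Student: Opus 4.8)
The plan is to obtain this corollary as an immediate consequence of the two preceding results, Lemma 4 and Theorem 4, so that no new estimate needs to be established. The key observation is that Assumptions 1--4 are arranged precisely so as to supply the hypotheses of Theorem 4, while the spectral bound $\sqrt{p}|\lambda_{max}(A-B^P\Omega^{-1}L)|<1$ is assumed outright.

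First I would invoke Lemma 4: under Assumptions 1--4 the coupled algebraic Riccati equations (\ref{30}) and (\ref{31}) admit unique solutions $P^W$ and $P^P$ with $P^W>0$ and $\Delta>0$, where $\Delta=(1-p)P^W+pP^P$ as in (\ref{36}). This settles the existence-of-solutions side completely, so the fixed-point and monotone-convergence work has already been carried out in Lemma 4 and need not be repeated. Next I would apply Theorem 4, which asserts that, under Assumptions 1 and 2 together with the spectral bound $\sqrt{p}|\lambda_{max}(A-B^P\Omega^{-1}L)|<1$, the system is bounded in the mean-square sense if and only if (\ref{30}) and (\ref{31}) possess solutions with $P^W>0$ and $\Delta>0$. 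Since Assumptions 1--4 subsume Assumptions 1 and 2, and the spectral bound is assumed in the corollary, all hypotheses of Theorem 4 hold; combining this with the existence just produced by Lemma 4, the sufficiency (``if'') direction of Theorem 4 delivers the asserted boundedness, with the stabilizing pair given by (\ref{42})--(\ref{43}).

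Because the argument merely concatenates two established statements, there is no genuinely hard step; the only thing to keep track of is the bookkeeping of which assumptions feed which result. Assumptions 3 and 4 enter solely through Lemma 4, to generate the positive-definite Riccati solutions, whereas Assumptions 1 and 2 and the spectral condition are consumed directly by Theorem 4. This is exactly the reduction used to prove Corollary 1 for the noise-free system (\ref{28}), so the same short derivation applies here verbatim.
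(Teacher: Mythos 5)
Your proposal is correct and matches the paper's own (omitted) argument: the paper proves Corollary 2 by noting it is "similar to that of Corollary 1," i.e., Lemma 4 supplies the unique solutions $P^W>0$, $\Delta>0$ under Assumptions 1--4, and the sufficiency direction of Theorem 4 (in place of Theorem 3) then yields mean-square boundedness under the additional spectral condition. Your bookkeeping of which assumptions feed Lemma 4 versus Theorem 4 is exactly the intended reduction.
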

\begin{proof}
The proof is similar to that of Corollary 1. Thus is omitted here.
\end{proof}
\section{Numerical Examples}
Recently, the control of autonomous unmanned underwater vehicle (AUUV) has gain increasing interests due to its extensive applications, such as deep-sea exploration, target tracking and precise striking [1], [2]. In this section, we shall investigate a simple AUUV system to illustrate the effectiveness of the proposed algorithm.

Consider a simple AUUV system including an unmanned underwater vehicle (UUV) and a mission-control center (MCC). Let $\xi_t$ and $\nu_t$ be the location and velocity of the UUV at time $t$ (it is assumed that the UUV sails in the straight line and the variables are one-dimensional for simplicity). Then, at time $t+1$, the location $\xi_{t+1}$ has the form as
\begin{align}
\xi_{t+1}=\xi_t+\nu_t+\theta_t,\label{50}
\end{align}
where $\theta_t$ stands for the disturbance during the navigation, e.g., undercurrent, and $\nu_t=\nu_t^P+\nu_t^W$ with $\nu_t^W$ being the imposed-velocity by the MCC and $\nu_t^P$ being the imposed-velocity of the UUV. The initial value $\xi_0$ and $\theta_t$ are Gaussian and independent, with mean ($\bar{\xi}_0,0$) and covariance ($\delta,Q_\theta$) respectively.

As can be seen in Fig. 2, the location $\xi_t$ delivered from the UUV to the MCC is prone to be lost with probability $p$ due to the limiting transmission capacity of the UUV. Then, the MCC sends the observed signals $f_t=\eta_t\xi_t$ ($\eta_t$ is the i.i.d. Bernoulli random variable, i.e., $\eta_t=1$ means the location transmits successfully, otherwise fails) to the UUV as well as the control mission. Since the MCC is generally full-equipped, the downlink from the MCC to the UUV is perfect. The UUV makes its own control action based on its own observations $\varphi_t$ ($\varphi_{t}=C\xi_t+\epsilon_t,$ where $\epsilon_t$ is the Gaussian white noise with zero mean and covariance $Q_\epsilon$, and $C$ is a constant) and the MCC's observations $f_t$. It is noted that the control action of the UUV and the control mission of the MCC perform on the UUV simultaneously.

The objective of the UUV system is to arrive at the destination (the location is $\tau$) and meanwhile the energy cost is minimized. To this end, we denote the above objective by the following performance
\begin{align}
 J_N=\sum_{t=0}^N\mathbb{E}[(\xi_t-\hspace{-0.8mm}\tau)'Q^c(\xi_t-\tau)+\hspace{-0.8mm}\nu_t^{P'}R^P\nu_t^P+\hspace{-0.8mm}\nu_t^{W'}R^W\nu_t^W],\label{51}
\end{align}
where the first term is the sum of quadratic distance between the real-time location and the destination, the second term is the sum of the quadratic real-time velocity, with $Q^c\geq0$, $R^P\geq0$ and $R^W\geq0$ being the weighting coefficients.
\begin{figure}[htbp]
  \begin{center}
  \includegraphics[width=0.48\textwidth]{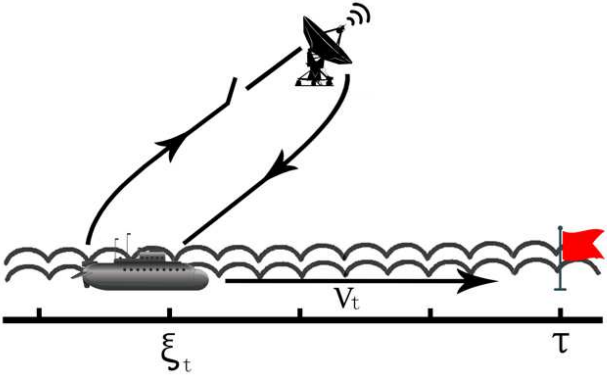}
  \caption{Over view of the AUUV system.} \label{fig:digit}
  \end{center}
\end{figure}

This AUUV system can be portrayed by the model of this paper. Denote $x_t=\xi_t-\tau$. Similar to (\ref{5}) and (\ref{6}), we define $\hat{\nu}_t^P=\mathbb{E}[\nu_t^P|\mathcal{F}\{f_t,\ldots,f_0\}]$, $\tilde{\nu}_t^P=\nu_t^P-\hat{\nu}_t^P$, $\hat{\nu}_t=\begin{bmatrix}\nu_t^W\\\hat{\nu}_t^P\end{bmatrix}$, $\hat{B}=\begin{bmatrix}1&1\end{bmatrix}$ and $R=\begin{bmatrix}R^W&0\\0&R^P\end{bmatrix}$. Then, the AUUV system (\ref{50}) can be rewritten as
\begin{align}
x_{t+1}=x_t+\hat{B}\hat{\nu}_t+\tilde{\nu}_t^P+\theta_t,\label{52}
\end{align}
The performance (\ref{51}) can be rewritten as
\begin{align}
J_N=\mathbb{E}\sum_{t=0}^N\{x_t'Q^cx_t+\hat{\nu}_t'R\hat{\nu}_t+\tilde{\nu}_t^{P'}R^P\tilde{\nu}_t^P\}.\label{53}
\end{align}
Comparing (\ref{7}), (\ref{8}) with (\ref{52}), (\ref{53}), the optimal strategies for the AUUV system can be obtained directly by applying Theorem 1 in Section II.

Set the system (\ref{52}) and the performance (\ref{53}) with $\bar{\xi}_0=0$, $\delta=Q_{\theta}=Q_\epsilon=1$, $\tau=30$, $C=1$, $Q^c=0.01$, $R^P=R^W=5$, $P_{N+1}=0$ and $N=100$.

To begin with, for the finite-horizon case, by applying Theorem 1, we draw Fig.3 and Fig. 4 as follows. Fig.3 shows the velocity of the UUV with $p=0$, $p=0.5$ and $p=1$ respectively. It is noted that there is little difference on the velocity of the UUV for different $p$. Fig. 4 presents the performance of the AUUV system for different $p$. It can be seen that the performance of the AUUV system becomes worse with the increasing of $p$.

For the infinite-horizon case, setting $p=0.5$, we shall firstly draw the curse of $\mathbb{E}[x_k'x_k]$ in Fig. 5 for the system (\ref{52}) without the additive noise $\theta_t$. It can be known that the regulated state is stable in the mean-square sense. Letting $p=0.6$, we draw the dynamic behavior of $\mathbb{E}[x_k'x_k]$ in Fig. 5 for the system (\ref{52}). It can be seen that the regulated state is bounded in the mean-square sense.
\begin{figure}[htbp]
  \begin{center}
  \includegraphics[width=0.38\textwidth]{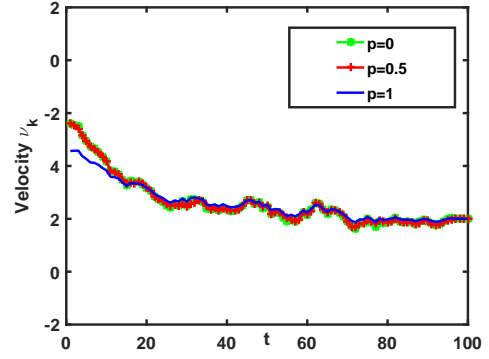}
  \caption{Velocity of the UUV for different $p$.} \label{fig:digit}
  \end{center}
\end{figure}
\begin{figure}[htbp]
  \begin{center}
  \includegraphics[width=0.38\textwidth]{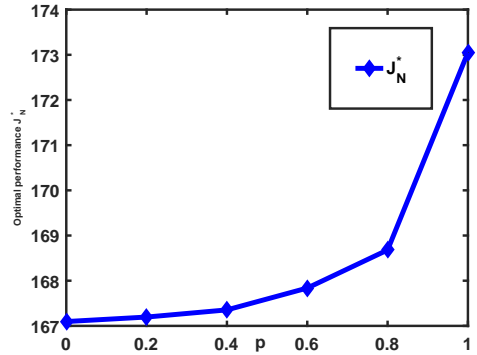}
  \caption{Optimal performance for different $p$.} \label{fig:digit}
  \end{center}
\end{figure}
\begin{figure}[htbp]
  \begin{center}
  \includegraphics[width=0.38\textwidth]{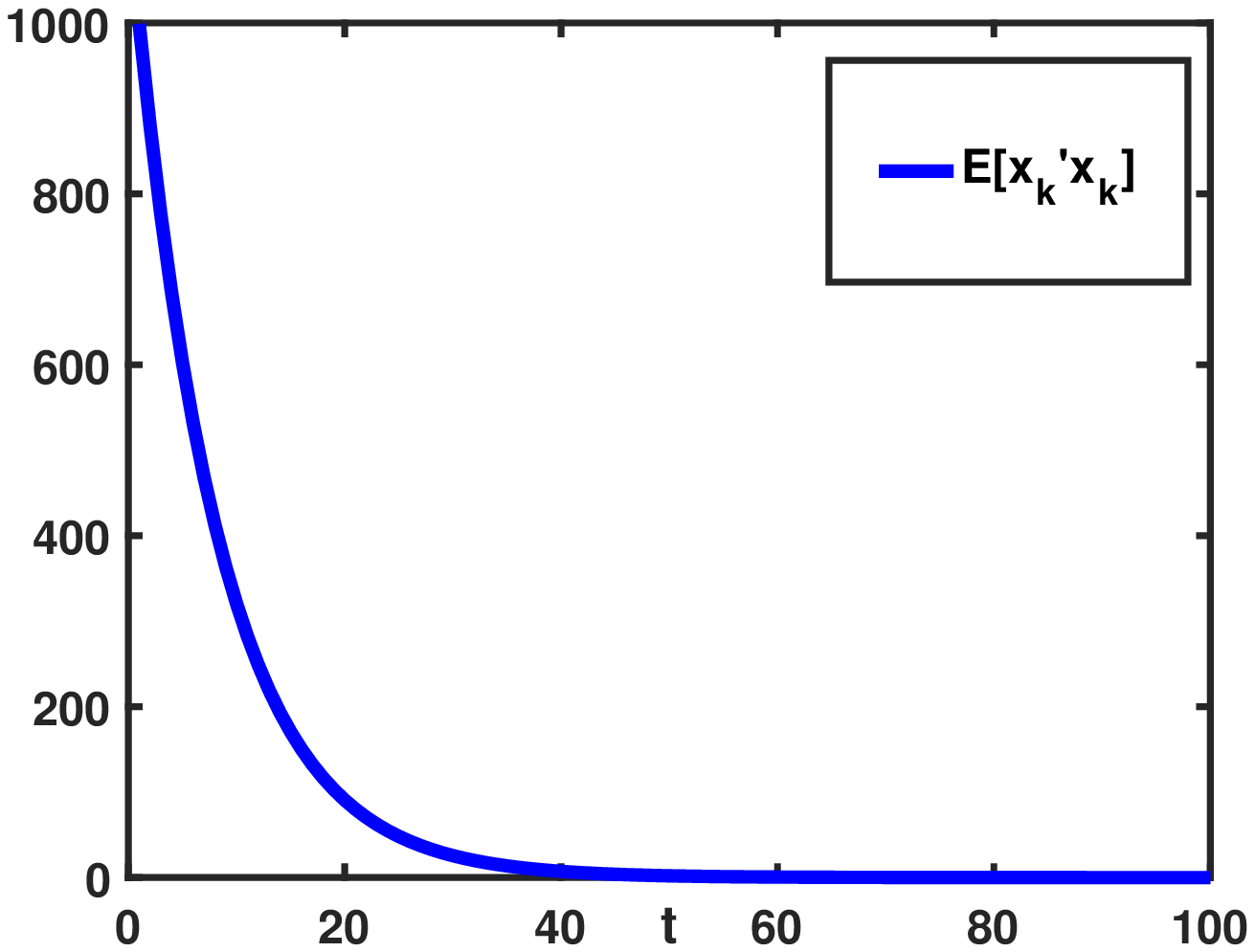}
  \caption{Dynamic Behavior of $E(x_k'x_k)$.} \label{fig:digit}
  \end{center}
\end{figure}
\begin{figure}[htbp]
  \begin{center}
  \includegraphics[width=0.38\textwidth]{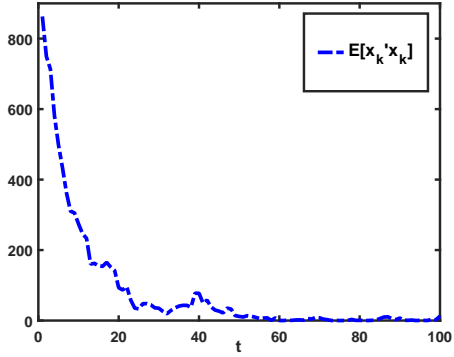}
  \caption{Dynamic Behavior of $E(x_k'x_k)$.} \label{fig:digit}
  \end{center}
\end{figure}
\section{Conclusion}
This paper is concerned about the optimal control and stabilization problem for NCSs with asymmetric information. We firstly present the optimal estimators by using the kalman filtering based on the asymmetric information. In virtue of the Pontryagin's maximum principle, we derive the solution to the FBSDEs. By applying this solution and two coupled Riccati equations, we give the optimal embedded and remote controllers. For the infinite-horizon case, based on the optimal performance, we define the Lyapunov function. In virtue of the Lyapunov function, the necessary and sufficient condition for the stabilization in the mean-square sense is given for the system without the additive noise. For the system with the additive noise, we show the necessary and sufficient condition for the boundedness in the mean-square sense in terms of two coupled algebraic Riccati equations. Finally, numerical examples about the unmanned underwater vehicle are shown.
\appendices
\section{Proof of Theorem 1}
\begin{proof}
``Sufficiency'': Suppose that $\Gamma_k$ and $\Omega_k$ are positive definite. We shall show that Problem 1 admits the unique solutions. By making use of (\ref{27}), denote the value function
\begin{align}
V_k=\mathbb{E}\big\{x_k'\big[P_k^W\hat{x}_{k|k}^W+P_k^P(\hat{x}_{k|k}^P-\hat{x}_{k|k}^W)\big]\big\}.\label{A.1}
\end{align}
Before proceeding the proof, we shall introduce the following preliminaries.

In virtue of (\ref{5}), (\ref{6}), (\ref{9}) and (\ref{10}), we get
\begin{align}
\nonumber&\hat{x}_{k+1|k+1}^W\\
\nonumber&=\gamma_{k+1}x_{k+1}+(1-\gamma_{k+1})\hat{x}_{k+1|k}^W\\
\nonumber                   &=\gamma_{k+1}(Ax_k\hspace{-0.8mm}+\hspace{-0.8mm}Bu_k\hspace{-0.8mm}+\hspace{-0.8mm}B^P\tilde{u}_k^P\hspace{-0.8mm}+\hspace{-0.8mm}\omega_k)\hspace{-0.8mm}+\hspace{-0.8mm}(1\hspace{-0.8mm}-\hspace{-0.8mm}\gamma_{k+1})(A\hat{x}_{k|k}^W\hspace{-0.8mm}+\hspace{-0.8mm}Bu_k)\\
                   &=\gamma_{k+1}A(x_k\hspace{-0.8mm}-\hspace{-0.8mm}\hat{x}_{k|k}^W)\hspace{-0.8mm}+\hspace{-0.8mm}\gamma_{k+1}B^P\tilde{u}_k^P+\hspace{-0.8mm}\gamma_{k+1}\omega_k+\hspace{-0.8mm}A\hat{x}_{k|k}^W+\hspace{-0.8mm}Bu_k,\label{A.2}
\end{align}
and
\begin{align}
\nonumber&\hat{x}_{k+1|k+1}^P\\
\nonumber&=\hspace{-0.8mm}\gamma_{k+1}x_{k+1}\hspace{-0.8mm}+\hspace{-0.8mm}(1\hspace{-0.8mm}-\hspace{-0.8mm}\gamma_{k+1})[\hat{x}^P_{{k+1}|k}\hspace{-0.8mm}+\hspace{-0.8mm}G^P_{{k+1}|k}(y^P_{k+1}\hspace{-0.8mm}-\hspace{-0.8mm}H\hat{x}^P_{{k+1}|k})]\\
\nonumber&=\gamma_{k+1}(Ax_k\hspace{-0.8mm}+\hspace{-0.8mm}Bu_k\hspace{-0.8mm}+\hspace{-0.8mm}B^P\tilde{u}_k^P\hspace{-0.8mm}+\hspace{-0.8mm}\omega_k)\hspace{-0.8mm}+\hspace{-0.8mm}(1\hspace{-0.8mm}-\hspace{-0.8mm}\gamma_{k+1})\{A\hat{x}_{k|k}^P\hspace{-0.8mm}+\hspace{-0.8mm}Bu_k\\
\nonumber&\quad+B^P\tilde{u}_k^P\hspace{-0.8mm}+\hspace{-0.8mm}G_{k+1|k}^P[H(Ax_k\hspace{-0.8mm}+\hspace{-0.8mm}Bu_k+\hspace{-0.8mm}B^P\tilde{u}_k^P+\omega_k)+\hspace{-0.8mm}v_{k+1}\\
\nonumber&\quad-H(A\hat{x}_{k|k}^P+Bu_k+B^P\tilde{u}_k^P)]\}\\
\nonumber&=\gamma_{k+1}A(x_k-\hat{x}_{k|k}^P)+A\hat{x}_{k|k}^P+Bu_k+B^P\tilde{u}_k^P+\gamma_{k+1}\omega_k\\
&\quad+(1-\hspace{-0.8mm}\gamma_{k+1})G_{k+1|k}^P[HA(x_k-\hspace{-0.8mm}\hat{x}_{k|k}^P)\hspace{-0.8mm}+H\omega_k\hspace{-0.8mm}+\hspace{-0.8mm}v_{k+1}].\label{A.3}
\end{align}
Combining (\ref{A.2}) with (\ref{A.3}), it yields that
\begin{align}
\nonumber&\hat{x}_{k+1|k+1}^P-\hat{x}_{k+1|k+1}^W\\
\nonumber&=(1-\gamma_{k+1})A(\hat{x}_{k|k}^P-\hat{x}_{k|k}^W)+(1-\gamma_{k+1})B^P\tilde{u}_k^P\\
&+(1-\gamma_{k+1})G_{k+1|k}^P[HA(x_k-\hat{x}_{k|k}^P)+H\omega_k+v_{k+1}].\label{A.4}
\end{align}
By applying (\ref{A.1}), (\ref{6}) and the orthogonality principle, it yields
\begin{align}
\nonumber &V_k\\
\nonumber&=\mathbb{E}[x_k'P_k^W\hat{x}_{k|k}^W+x_k'P_k^P(\hat{x}_{k|k}^P-\hat{x}_{k|k}^W)]\\
\nonumber&=\mathbb{E}[x_k'P_k^Wx_k\hspace{-0.8mm}-x_k'P_k^W(x_k-\hspace{-0.8mm}\hat{x}_{k|k}^W)]\hspace{-0.8mm}+\hspace{-0.8mm}\mathbb{E}\{[(x_k-\hspace{-0.8mm}\hat{x}_{k|k}^W)+\hspace{-0.8mm}\hat{x}_{k|k}^W]'\\
\nonumber&\quad\times P_k^P[(x_k-\hat{x}_{k|k}^W)-(x_k-\hat{x}_{k|k}^P)]\}\\
\nonumber&=\mathbb{E}[x_k'P_k^Wx_k-(x_k-\hspace{-0.8mm}\hat{x}_{k|k}^W)'P_k^W(x_k-\hspace{-0.8mm}\hat{x}_{k|k}^W)]\\
\nonumber&\quad+\hspace{-0.8mm}\mathbb{E}[(x_k\hspace{-0.8mm}-\hspace{-0.8mm}\hat{x}_{k|k}^W)'P_k^P(x_k-\hspace{-0.8mm}\hat{x}_{k|k}^W)\hspace{-0.8mm}-\hspace{-0.8mm}(x_k\hspace{-0.8mm}-\hspace{-0.8mm}\hat{x}_{k|k}^W)'P_k^P(x_k\hspace{-0.8mm}-\hspace{-0.8mm}\hat{x}_{k|k}^P)]\\
\nonumber&=\hspace{-0.8mm}\mathbb{E}[x_k'P_k^Wx_k\hspace{-0.8mm}-\hspace{-0.8mm}(\hat{x}_{k|k}^P\hspace{-0.8mm}-\hspace{-0.8mm}\hat{x}_{k|k}^W)'P_k^W(\hat{x}_{k|k}^P\hspace{-0.8mm}-\hspace{-0.8mm}\hat{x}_{k|k}^W)\hspace{-0.8mm}-\hspace{-0.8mm}tr(\Sigma_{k|k}^PP_k^W)]\\
\nonumber&\quad+\mathbb{E}[(\hat{x}_{k|k}^P\hspace{-0.8mm}-\hspace{-0.8mm}\hat{x}_{k|k}^W)'P_k^P(\hat{x}_{k|k}^P-\hspace{-0.8mm}\hat{x}_{k|k}^W)]\\
\nonumber&=\mathbb{E}[x_k'P_k^Wx_k\hspace{-0.8mm}+(\hat{x}_{k|k}^P\hspace{-0.8mm}-\hspace{-0.8mm}\hat{x}_{k|k}^W)'(P_k^P-P_k^W)(\hat{x}_{k|k}^P-\hspace{-0.8mm}\hat{x}_{k|k}^W)]\\
&\quad-tr(\Sigma_{k|k}^PP_k^W).\label{A.5}
\end{align}
In virtue of (\ref{A.1}), (\ref{6}), (\ref{7}), (\ref{A.2})-(\ref{A.4}), we have
\begin{align}
\nonumber&V_{k+1}\\
\nonumber&=\mathbb{E}\{x_{k+1}'[P_{k+1}^W\hat{x}_{k+1|k+1}^W+P_{k+1}^P(\hat{x}_{k+1|k+1}^P-\hat{x}_{k+1|k+1}^W)]\}\\
\nonumber&=\mathbb{E}\big\{(1\hspace{-0.8mm}-\hspace{-0.8mm}p)x_k'A'P_{k+1}^WA(x_k\hspace{-0.8mm}-\hspace{-0.8mm}\hat{x}_{k|k}^W)\hspace{-0.8mm}+\hspace{-0.8mm}(1\hspace{-0.8mm}-\hspace{-0.8mm}p)x_k'A'P_{k+1}^WB^P\tilde{u}_k^P\\
\nonumber&\quad+x_k'A'P_{k+1}^WA\hat{x}_{k|k}^W+x_k'A'P_{k+1}^WBu_k+u_k'B'P_{k+1}^WA\hat{x}_{k|k}^W\\
\nonumber&\quad+u_k'B'P_{k+1}^WBu_k+(1-p)\tilde{u}_k^{P'}B^{P'}P_{k+1}^WA(x_k-\hat{x}_{k|k}^W)\\
\nonumber&\quad+(1-p)\tilde{u}_k^{P'}B^{P'}P_{k+1}^WB^P\tilde{u}_k^P+(1-p)tr(Q_\omega P_{k+1}^W)\\
\nonumber&\quad-p[(x_k-\hspace{-0.8mm}\hat{x}_{k|k}^P)'(A-\hspace{-0.8mm}G_{k+1|k}^PHA)'P_{k+1}^P(A-\hspace{-0.8mm}G_{k+1|k}^PHA)\\
\nonumber&\quad\times(x_k-\hat{x}_{k|k}^P)]-ptr[Q_vG_{k+1|k}^{P'}P_{k+1}^PG_{k+1|k}^P+Q_\omega(I\\
\nonumber&\quad-G_{k+1|k}^PH)'P_{k+1}^P(I-\hspace{-0.8mm}G_{k+1|k}^PH)]+p[x_k'A'P_{k+1}^PA(x_k\\
\nonumber&\quad-\hat{x}_{k|k}^W)+x_k'A'P_{k+1}^PB^P\tilde{u}_k^P+\tilde{u}_k^{P'}B^{P'}P_{k+1}^PA(x_k-\hat{x}_{k|k}^W)\\
\nonumber&\quad+\tilde{u}_k^{P'}B^{P'}P_{k+1}^PB^P\tilde{u}_k^P+tr(Q_\omega P_{k+1}^P)]\big\}\\
\nonumber&=\mathbb{E}\big\{(1-p)\big[(\hat{x}_{k|k}^P-\hat{x}_{k|k}^W)'A'P_{k+1}^WA(\hat{x}_{k|k}^P-\hat{x}_{k|k}^W)\\
\nonumber&\quad+tr(\Sigma_{k|k}^PA'P_{k+1}^WA)+2(\hat{x}_{k|k}^P-\hat{x}_{k|k}^W)'A'P_{k+1}^WB^P\tilde{u}_k^P\\
\nonumber&\quad+\tilde{u}_k^{P'}B^{P'}P_{k+1}^WB^P\tilde{u}_k^P+\hspace{-0.8mm}tr(Q_\omega P_{k+1}^W)\big]\hspace{-0.8mm}+\hspace{-0.8mm}p\big[(\hat{x}_{k|k}^P-\hat{x}_{k|k}^W)'\\
\nonumber&\quad\times A'P_{k+1}^PA(\hat{x}_{k|k}^P\hspace{-0.8mm}-\hspace{-0.8mm}\hat{x}_{k|k}^W)\hspace{-0.8mm}+\hspace{-0.8mm}2(\hat{x}_{k|k}^P-\hat{x}_{k|k}^W)'A'P_{k+1}^PB^P\tilde{u}_k^P\\
\nonumber&\quad+\tilde{u}_k^{P'}B^{P'}P_{k+1}^PB^P\tilde{u}_k^P\big]+x_k'A'P_{k+1}^WAx_k-\hspace{-0.8mm}(\hat{x}_{k|k}^P\hspace{-0.8mm}-\hspace{-0.8mm}\hat{x}_{k|k}^W)'\\
\nonumber&\quad\times A'P_{k+1}^WA(\hat{x}_{k|k}^P\hspace{-0.8mm}-\hspace{-0.8mm}\hat{x}_{k|k}^W)+2u_k'B'P_{k+1}^WA\hat{x}_{k|k}^W\hspace{-0.8mm}+\hspace{-0.8mm}u_k'B'P_{k+1}^W\\
\nonumber&\quad\times Bu_k-ptr[\Sigma_{k|k}^P(A-\hspace{-0.8mm}G_{k+1|k}^PHA)'P_{k+1}^P(A-\hspace{-0.8mm}G_{k+1|k}^PH\\
\nonumber&\quad\times A)]-ptr[Q_vG_{k+1|k}^{P'}P_{k+1}^PG_{k+1|k}^P+Q_\omega(I-G_{k+1|k}^PH)'\\
&\quad\times P_{k+1}^P(I-\hspace{-0.8mm}G_{k+1|k}^PH)-Q_\omega P_{k+1}^P]\big\}.\label{A.6}
\end{align}
Combining (\ref{A.5}) with (\ref{A.6}) and using (\ref{19})-(\ref{25}), we get
\begin{align}
\nonumber&V_k-V_{k+1}\\
\nonumber&=\mathbb{E}\big\{\hspace{-0.8mm}x_k'(P_k^W\hspace{-0.8mm}-\hspace{-0.8mm}A'P_{k+1}^WA\hspace{-0.8mm}+\hspace{-0.8mm}M_k'\Gamma_k^{-1}M_k)x_k\hspace{-0.8mm}-\hspace{-0.8mm}x_k'M_k'\Gamma_k^{-1}M_kx_k\\
\nonumber&\quad+(\hat{x}_{k|k}^P\hspace{-0.8mm}-\hspace{-0.8mm}\hat{x}_{k|k}^W)'[P_k^P-P_k^W+pA'P_{k+1}^WA-pA'P_{k+1}^PA]\\
\nonumber&\quad\times(\hat{x}_{k|k}^P\hspace{-0.8mm}-\hspace{-0.8mm}\hat{x}_{k|k}^W)-2\tilde{u}_k^{P'}B^{P'}[pP_{k+1}^P+(1-p)P_{k+1}^W]A(\hat{x}_{k|k}^P\hspace{-0.8mm}\\
\nonumber&\quad-\hspace{-0.8mm}\hat{x}_{k|k}^W)-\tilde{u}_k^{P'}(\Omega_k-R^R)\tilde{u}_k^P-2u_k'B'P_{k+1}^WA\hat{x}_{k|k}^W\\
\nonumber&\quad-u_k'(\Gamma_k\hspace{-0.8mm}-\hspace{-0.8mm}R)u_k\big\}\hspace{-0.8mm}-\hspace{-0.8mm}tr\big\{Q_\omega[(1\hspace{-0.8mm}-\hspace{-0.8mm}p)P_{k+1}^W\hspace{-0.8mm}-\hspace{-0.8mm}p(I\hspace{-0.8mm}-\hspace{-0.8mm}G_{k+1|k}^PH)'\\
\nonumber&\quad\times P_{k+1}^P(I\hspace{-0.8mm}-\hspace{-0.8mm}G_{k+1|k}^PH)\hspace{-0.8mm}+\hspace{-0.8mm}pP_{k+1}^P]\hspace{-0.8mm}-\hspace{-0.8mm}pQ_vG_{k+1|k}^{P'}P_{k+1}^PG_{k+1|k}^P\\
\nonumber&\quad+\Sigma_{k|k}^P[-pA'P_{k+1}^WA+P_k^W+M_k'\Gamma_k^{-1}M_k\\
\nonumber&\quad+p(A-\hspace{-0.8mm}G_{k+1|k}^PHA)'P_{k+1}^P(A-\hspace{-0.8mm}G_{k+1|k}^PHA)]\big\}\\
\nonumber&=\mathbb{E}\big\{x_k'Qx_k+u_k'Ru_k+\tilde{u}_k^{P'}R^P\tilde{u}_k^P-(2u_k'B'P_{k+1}^WA\hat{x}_{k|k}^W\\
\nonumber&\quad+u_k'\Gamma_ku_k+\hat{x}_{k|k}^WM_k'\Gamma_k^{-1}M_k\hat{x}_{k|k}^W)-\big[2\tilde{u}_k^{P'}B^{P'}\Delta_{k+1}A\\
\nonumber&\quad\times(\hat{x}_{k|k}^P\hspace{-0.8mm}-\hspace{-0.8mm}\hat{x}_{k|k}^W)+\tilde{u}_k^{P'}\Omega_k\tilde{u}_k^P-(\hat{x}_{k|k}^P\hspace{-0.8mm}-\hspace{-0.8mm}\hat{x}_{k|k}^W)'(P_k^P-P_k^W\\
\nonumber&\quad+pA'P_{k+1}^WA-pA'P_{k+1}^PA-M_k'\Gamma_k^{-1}M_k)(\hat{x}_{k|k}^P\hspace{-0.8mm}-\hspace{-0.8mm}\hat{x}_{k|k}^W)\big]\big\}\\
\nonumber&\quad-tr\big\{\Sigma_{k|k}^P[A'\Delta_{k+1}A\hspace{-0.8mm}+Q-p(A-G_{k+1|k}^PHA)'P_{k+1}^P(A\\
\nonumber&\quad-G_{k+1|k}^PHA)]\hspace{-0.8mm}+\hspace{-0.8mm}Q_\omega[(\Delta_{k+1}-p(I\hspace{-0.8mm}-G_{k+1|k}^PH)'P_{k+1}^P\\
\nonumber&\quad\times(I-G_{k+1|k}^PH)]-pQ_vG_{k+1|k}^{P'}P_{k+1}^PG_{k+1|k}^P\big\}
\end{align}
\begin{align}
\nonumber&=\mathbb{E}\big\{x_k'Qx_k+u_k'Ru_k+\tilde{u}_k^{P'}R^P\tilde{u}_k^P-(u_k+\Gamma_k^{-1}M_k\hat{x}_{k|k}^W)'\\
\nonumber&\quad\times\Gamma_k(u_k+\Gamma_k^{-1}M_k\hat{x}_{k|k}^W)-[\tilde{u}_k^P+\Omega_k^{-1}L_k(\hat{x}_{k|k}^P\hspace{-0.8mm}-\hspace{-0.8mm}\hat{x}_{k|k}^W)]'\\
\nonumber&\quad\times\Omega_k[\tilde{u}_k^P+\Omega_k^{-1}L_k(\hat{x}_{k|k}^P\hspace{-0.8mm}-\hspace{-0.8mm}\hat{x}_{k|k}^W)]\big\}\\
\nonumber&\quad-tr\big\{\Sigma_{k|k}^P[A'\Delta_{k+1}A\hspace{-0.8mm}+Q-p(A-G_{k+1|k}^PHA)'P_{k+1}^P(A\\
\nonumber&\quad-G_{k+1|k}^PHA)]\hspace{-0.8mm}+\hspace{-0.8mm}Q_\omega[(\Delta_{k+1}-p(I\hspace{-0.8mm}-G_{k+1|k}^PH)'P_{k+1}^P\\
\nonumber&\quad\times(I-G_{k+1|k}^PH)]-pQ_vG_{k+1|k}^{P'}P_{k+1}^PG_{k+1|k}^P\big\}.
\end{align}
Adding from $k=0$ to $k=N$ on both sides of the above equation, the performance (\ref{8}) can be written as
\begin{align}
\nonumber&J_N\\
\nonumber&=\mathbb{E}\big\{x_0'\big[P_0^W\hat{x}_{0|0}^W+\hspace{-0.8mm}P_0^P(\hat{x}_{0|0}^P-\hspace{-0.8mm}\hat{x}_{0|0}^W)\big]\big\}\hspace{-0.8mm}+\hspace{-0.8mm}\sum_{k=0}^Ntr\bigg\{\Sigma_{k|k}^P\\
\nonumber&\quad\times[(A'\Delta_{k+1}A\hspace{-0.8mm}+Q-p(A-G_{k+1|k}^PHA)'P_{k+1}^P(A\\
\nonumber&\quad-G_{k+1|k}^PHA)]\hspace{-0.8mm}+\hspace{-0.8mm}Q_\omega[(\Delta_{k+1}-p(I\hspace{-0.8mm}-G_{k+1|k}^PH)'P_{k+1}^P\\
\nonumber&\quad\times(I-G_{k+1|k}^PH)]-pQ_vG_{k+1|k}^{P'}P_{k+1}^PG_{k+1|k}^P\\
\nonumber&\quad+\Sigma_{N+1|N+1}^PP_{N+1}\bigg\}+\sum_{k=0}^N\{(u_k+\Gamma_k^{-1}M_k\hat{x}_{k|k}^W)'\Gamma_k(u_k\\
\nonumber&\quad+\Gamma_k^{-1}M_k\hat{x}_{k|k}^W)+\hspace{-0.8mm}[\tilde{u}_k^P\hspace{-0.8mm}+\hspace{-0.8mm}\Omega_k^{-1}L_k(\hat{x}_{k|k}^P\hspace{-0.8mm}-\hspace{-0.8mm}\hat{x}_{k|k}^W)]'\Omega_k\\
\nonumber&\quad\times[\tilde{u}_k^P\hspace{-0.8mm}+\hspace{-0.8mm}\Omega_k^{-1}L_k(\hat{x}_{k|k}^P\hspace{-0.8mm}-\hspace{-0.8mm}\hat{x}_{k|k}^W)\}
\end{align}
Note that $\Gamma_k>0$ and $\Omega_k>0$ for $k=0,\ldots,N$. Thus, the optimal controllers are given by (\ref{17}) and (\ref{18}). Accordingly, the optimal performance is as (\ref{26}). This ends the proof of the sufficiency.

``Necessity'': The proof of the necessity is similar to that of \cite{R18}. Thus we omit here. We shall show that (\ref{27}) holds for $k=N+1,\ldots,0$ by mathematical induction.

Firstly, with (\ref{16}) and $P^W_{N+1}=P^P_{N+1}=P_{N+1}$, it is readily obtained that (\ref{27}) holds for $k=N+1$.

For $k=N$, using (\ref{7}), (\ref{6}) and (\ref{16}), we have (\ref{14}) as
\begin{align}
\nonumber0&=B'P_{N+1}(A\hat{x}_{N|N}^W+Bu_N)+Ru_N.
\end{align}
Thus, with (\ref{19}) and (\ref{20}), the optimal $u_N$ is presented as
\begin{align}
u_N=-\Gamma_N^{-1}M_N\hat{x}_{N|N}^W.\label{A.7}
\end{align}
In virtue of (\ref{7}), (\ref{6}) and (\ref{16}), (\ref{15}) can be calculated by
\begin{align}
\nonumber0&=B^{P'}P_{N+1}(A\hat{x}_{N|N}^P+Bu_N+B^{P'}\tilde{u}_N^{P'})\\
\nonumber &\quad-B^{P'}P_{N+1}(A\hat{x}_{N|N}^W+Bu_N)+R^P\tilde{u}_k^P.
\end{align}
Hence, using (\ref{21})-(\ref{23}), the optimal $\tilde{u}_N^P$ is given by
\begin{align}
\tilde{u}_N^P=-\Omega_N^{-1}L_N(\hat{x}_{N|N}^P-\hat{x}_{N|N}^W).\label{A.8}
\end{align}
By making use of (\ref{7}), (\ref{16}), (\ref{A.7}) and (\ref{A.8}), (\ref{13}) can be written as
\begin{align}
\nonumber\lambda_{N-1}&=A'P_{N+1}(A\hat{x}_{N|N}^P+Bu_N+B^P\tilde{u}_N^P)+Q\hat{x}_{N|N}^P\\
\nonumber             &=(A'P_{N+1}A-A'P_{N+1}B\Gamma_N^{-1}M_N+Q)\hat{x}_{N|N}^W+\hspace{-0.8mm}(A'\\
\nonumber             &\quad\times P_{N+1}A\hspace{-0.8mm}-\hspace{-0.8mm}A'P_{N+1}B^P\Omega_N^{-1}L_N\hspace{-0.8mm}+\hspace{-0.8mm}Q)(\hat{x}_{N|N}^P\hspace{-0.8mm}-\hspace{-0.8mm}\hat{x}_{N|N}^W)
\end{align}
Noting (\ref{19})-(\ref{25}), it can be known that (\ref{27}) holds for $k=N$. In order to accomplish the proof of the mathematical induction, let any $l$ with $0\leq l\leq N$. Assume that $\lambda_{k-1}$ are as (\ref{27}) for $k\geq l+1$. Now we shall prove that (\ref{27}) holds for $k=l$.

For $k=l+1$, (\ref{27}) is as
\begin{align}
\lambda_l=P_{l+1}^W\hat{x}_{l+1|l+1}^W+P_{l+1}^P(\hat{x}_{l+1|l+1}^P-\hat{x}_{l+1|l+1}^W).\label{A.9}
\end{align}
By making use of (\ref{6}) and (\ref{A.9}), (\ref{14}) becomes
\begin{align}
\nonumber0&=B'P_{l+1}^W(A\hat{x}_{l|l}^W+Bu_l)+Ru_l.
\end{align}
Using (\ref{19}) and (\ref{20}), we have the optimal $u_l$ as
\begin{align}
u_l=-\Gamma_l^{-1}M_l\hat{x}_{l|l}^W.\label{A.10}
\end{align}
By applying (\ref{A.2}), (\ref{A.4}) and (\ref{A.9}), (\ref{15}) can be calculated as
\begin{align}
\nonumber0&=B^{P'}P_{l+1}^W[(1-p)A(\hat{x}_{l|l}^P-\hspace{-0.8mm}\hat{x}_{l|l}^W)\hspace{-0.8mm}+(1-p)B^P\tilde{u}_l^P+\hspace{-0.8mm}A\hat{x}_{l|l}^W\\
\nonumber&\quad+Bu_l]+B^{P'}P_{l+1}^P[pA(\hat{x}_{l|l}^P-\hat{x}_{l|l}^W)+pB^P\tilde{u}_l^P]\\
\nonumber&\quad-B^{P'}P_{l+1}^W(A\hat{x}_{l|l}^W+Bu_l)+R^P\tilde{u}_l^P\\
\nonumber&=B^{P'}[(1-p)P_{l+1}^W+pP_{l+1}^P]A(\hat{x}_{l|l}^P-\hat{x}_{l|l}^W)\\
\nonumber&\quad+B^{P'}[(1-p)P_{l+1}^W+pP_{l+1}^P]B^P\tilde{u}_l^P+R^P\tilde{u}_l^P.
\end{align}
With (\ref{21})-(\ref{23}), the optimal $\tilde{u}_l^P$ is as
\begin{align}
\tilde{u}_l^P=-\Omega_l^{-1}L_l(\hat{x}_{l|l}^P-\hat{x}_{l|l}^W).\label{A.11}
\end{align}
Using (\ref{13}), (\ref{A.2}), (\ref{A.4}), (\ref{A.9}), (\ref{A.10}) and (\ref{A.11}), $\lambda_{l-1}$ can be calculated as
\begin{align}
\nonumber\lambda_{l-1}&=A'P_{l+1}^W[(1-p)A(\hat{x}_{l|l}^P\hspace{-0.8mm}-\hspace{-0.8mm}\hat{x}_{l|l}^W)\hspace{-0.8mm}+\hspace{-0.8mm}(1-p)B^P\tilde{u}_l^P+\hspace{-0.8mm}A\hat{x}_{l|l}^W\\
\nonumber&\quad+Bu_l]+A'P_{l+1}^P[pA(\hat{x}_{l|l}^P\hspace{-0.8mm}-\hspace{-0.8mm}\hat{x}_{l|l}^W)+pB^P\tilde{u}_l^P]+Q\hat{x}_{l|l}^P\\
\nonumber&=A'[(1-p)P_{l+1}^W+pP_{l+1}^P]A(\hat{x}_{l|l}^P\hspace{-0.8mm}-\hspace{-0.8mm}\hat{x}_{l|l}^W)+A'P_{l+1}^WA\hat{x}_{l|l}^W\\
\nonumber&\quad-A'[(1-p)P_{l+1}^W+pP_{l+1}^P]B^P\Omega_l^{-1}L_l(\hat{x}_{l|l}^P\hspace{-0.8mm}-\hspace{-0.8mm}\hat{x}_{l|l}^W)\\
\nonumber&\quad-A'P_{l+1}^WB\Gamma_l^{-1}M_l\hat{x}_{l|l}^W+Q\hat{x}_{l|l}^P\\
\nonumber&=\big\{A'[(1\hspace{-0.8mm}-p)P_{l+1}^W\hspace{-0.8mm}+pP_{l+1}^P]A\hspace{-0.8mm}-\hspace{-0.8mm}A'[(1\hspace{-0.8mm}-p)P_{l+1}^W\hspace{-0.8mm}+pP_{l+1}^P]\\
\nonumber&\quad\quad\times B^P\Omega_l^{-1}L_l+Q\big\}(\hat{x}_{l|l}^P\hspace{-0.8mm}-\hspace{-0.8mm}\hat{x}_{l|l}^W)\\
\nonumber&\quad+(A'P_{l+1}^WA-M_l'\Gamma_l^{-1}M_l+Q)\hat{x}_{l|l}^W.
\end{align}
By applying (\ref{19})-(\ref{25}), we have that (\ref{27}) is valid for $k=l$. Therefore, we have proven that (\ref{27}) holds for $k=N,\ldots,0.$
\end{proof}
\section{Proof of Theorem 2}
\begin{proof}
Under Assumptions 1 and 2, supposing that the system (\ref{28}) is stabilizable in the mean-square sense, we shall show that there exist the solutions $P^W$ and $P^P$ to the algebraic Riccati equations (\ref{30}) and (\ref{31}) such that $P^W>0$ and $\Delta>0$.

To make the time horizon $N$ explicit in the finite horizon case, we rewrite $P_k^W$, $P_k^P$, $M_k$, $\Gamma_k$, $\Delta_k$, $\Omega_k$ and $L_k$ in (\ref{19})-(\ref{25}) as $P_k^W(N)$, $P_k^P(N)$, $M_k(N)$, $\Gamma_k(N)$, $\Delta_k(N)$, $\Omega_k(N)$ and $L_k(N)$.

Combining the algebraic Riccati equations (\ref{30})-(\ref{31}) with the observation equation (\ref{2}), it can be known that they are uncorrelated with each other. Hence, we set $H=I$ and $v_k=0$. Then the observation equation (\ref{2}) becomes $y_k^P=x_k$. Accordingly, it is readily obtained that
\begin{align}
\hat{x}_{k|k}^P=x_k,\Sigma_{k|k}^P=0.\label{C.1}
\end{align}
Noting that the algebraic Riccati equations (\ref{30})-(\ref{31}) are uncorrelated with the initial value, we set $\mu=0$. Then the optimal performance (\ref{26}) (without the additive noise) becomes
\begin{align} J_N^*&=\mathbb{E}\big\{x_0'\big[P_0^W(N)\hat{x}_{0|0}^W+\hspace{-0.8mm}P_0^P(N)(\hat{x}_{0|0}^P-\hspace{-0.8mm}\hat{x}_{0|0}^W)\big]\big\}\label{C.4}\\
\nonumber      &=\mathbb{E}[\gamma_0x_0'P_0^W(N)x_0+x_0'P_0^P(N)(x_0-\gamma_0x_0)]\\
   &=\mathbb{E}[x_0'\Delta_0(N)x_0]\geq0.\label{C.2}
\end{align}
Then we have that
\begin{align}
J_N^*=\mathbb{E}[x_0'\Delta_0(N)x_0]\leq\mathbb{E}[x_0'\Delta_0(N+1)x_0]= J_{N+1}^*. \nonumber
\end{align}
Since the initial value $x_0$ is arbitrary, it yields that $\Delta_0(N)$ increases with respect to $N$. Next we shall show that $\Delta_0(N)$ is bounded. Noting that the system (\ref{28}) is stabilizable in the mean-square sense, then from the Definition 1, we have that
\begin{align}
\nonumber&\lim_{k\to\infty}\mathbb{E}(x_k'x_k)\\
\nonumber&=\lim_{k\to\infty}\mathbb{E}\{[(x_k-\hat{x}_{k|k}^P)+(\hat{x}_{k|k}^P-\hat{x}_{k|k}^W)+\hat{x}_{k|k}^W]'[(x_k-\hat{x}_{k|k}^P)\\
\nonumber&\qquad\qquad+(\hat{x}_{k|k}^P-\hat{x}_{k|k}^W)+\hat{x}_{k|k}^W]\}\\
\nonumber&=\lim_{k\to\infty}\big\{tr(\Sigma_{k|k}^P)+\mathbb{E}[(\hat{x}_{k|k}^P-\hat{x}_{k|k}^W)'(\hat{x}_{k|k}^P-\hat{x}_{k|k}^W)]\\
&\qquad\qquad+\mathbb{E}(\hat{x}_{k|k}^{W'}\hat{x}_{k|k}^W)\big\}=0.\label{C.3}
\end{align}
Combining (\ref{C.1}) with (\ref{C.3}), we have that
\begin{align}
\lim_{k\to\infty}\mathbb{E}(\hat{x}_{k|k}^{W'}\hat{x}_{k|k}^W)=0,\mathbb{E}[(\hat{x}_{k|k}^P-\hat{x}_{k|k}^W)'(\hat{x}_{k|k}^P-\hat{x}_{k|k}^W)]=0.\nonumber
\end{align}
From \cite{R30}, there exist constants $l_1>0$, $l_2>0$ and $l_3>0$ such that
\begin{align}
\nonumber&\sum_{k=0}^\infty\mathbb{E}(x_k'x_k)\leq l_1\mathbb{E}(x_0'x_0),\sum_{k=0}^\infty\mathbb{E}(\hat{x}_{k|k}^{W'}\hat{x}_{k|k}^W)\leq l_2\mathbb{E}(\hat{x}_{0|0}^{W'}\hat{x}_{0|0}^W)\\
\nonumber&\sum_{k=0}^\infty\mathbb{E}[(\hat{x}_{k|k}^P-\hat{x}_{k|k}^W)'(\hat{x}_{k|k}^P-\hat{x}_{k|k}^W)]\leq l_3\mathbb{E}[(\hat{x}_{0|0}^P-\hat{x}_{0|0}^W)'\\
\nonumber&\qquad\qquad\qquad\qquad\qquad\qquad\qquad\qquad\times(\hat{x}_{0|0}^P-\hat{x}_{0|0}^W)].
\end{align}
Noting Definition 2, let a constant $l_4$ such that $L^{W'}RL^W\leq l_4 I$, $L^{P'}RL^P\leq l_4 I$ and $Q\leq l_4 I$. Then the performance (\ref{29}) becomes
\begin{align}
\nonumber J&=\mathbb{E}\sum_{k=0}^\infty[{x_k}'Qx_k+u_{k}'Ru_k+\tilde{u}_{k}^{P'}R^P\tilde{u}_{k}^P]\\
\nonumber  &=\mathbb{E}\sum_{k=0}^\infty({x_k}'Qx_k)+\mathbb{E}\sum_{k=0}^\infty(\hat{x}_{k|k}^{W'}L^{W'}RL^W\hat{x}_{k|k}^W)\\
\nonumber  &\quad+\mathbb{E}\sum_{k=0}^\infty[(\hat{x}_{k|k}^P-\hat{x}_{k|k}^W)'L^{P'}R^PL^P(\hat{x}_{k|k}^P-\hat{x}_{k|k}^W)]\\
\nonumber  &\quad\leq l_4\{l_1\mathbb{E}(x_0'x_0)+l_2\mathbb{E}(\hat{x}_{0|0}^{W'}\hat{x}_{0|0}^W)\\
\nonumber&\quad\quad\quad+l_3\mathbb{E}[(\hat{x}_{0|0}^P-\hat{x}_{0|0}^W)'(\hat{x}_{0|0}^P-\hat{x}_{0|0}^W)]\}.
\end{align}
Thus, with (\ref{C.2}), we get
\begin{align}
\nonumber\mathbb{E}[x_0'\Delta_0(N)x_0]=J_N^*\leq J,
\end{align}
which means that $\Delta_0(N)$ is bounded. Hence, $\Delta_0(N)$ is convergent.

It is noted that the variables in (\ref{19})-(\ref{25}) are time invariant for $N$ due to the choice that $P_{N+1}=0$, i.e.,
\begin{align}
\nonumber P_k^W(N)&=P_{k-s}^W(N-s), P_k^P(N)=P_{k-s}^P(N-s),\\
\nonumber L_k(N)&=L_{k-s}(N-s), \Gamma_k=\Gamma_{k-s}(N-s),\\
\nonumber\Delta_k(N)&=\Delta_{k-s}(N-s), \Omega_k(N)=\Omega_{k-s}(N-s),\\
\nonumber M_k(N)&=M_{k-s}(N-s), s\leq k\leq N, 0\leq s\leq N.
\end{align}
Hence, it yields that
\begin{align}
\lim_{N\to\infty}\Delta_k(N)=\lim_{N\to\infty}\Delta_0(N-k)=\Delta.\nonumber
\end{align}
Thus, we have shown that $\Delta_k(N)$ is convergent. Now we shall prove that $P_k^W(N)$ and $P_k^P(N)$ are convergent respectively. Noting that $P_k^W(N)$ is uncorrelated with the packet dropout probability $p$, we set $p=0$. With (\ref{23}), it is readily obtained that $P_k^W(N)$ is convergent due to the convergence of $\Delta_k(N)$. Accordingly, from (\ref{23}), the convergence of $P_k^P(N)$ can been obtained for the convergence of $P_k^W(N)$ and $\Delta_k(N)$.

Finally, we shall show that there exists $l>0$ satisfying $\Delta_0(l)>0$. Assume this is not the case. Then there exists $x_0\neq0$ such that $\mathbb{E}(x_0'\Delta_0(N)x_0)=0$. The optimal performance (\ref{C.2}) becomes
\begin{align}
\nonumber J_N^*&=\sum_{k=0}^N\mathbb{E}[x_k^{*'}Qx_k^*+u_k^{*'}Ru_k+\tilde{u}_k^{L*}R^L\tilde{u}_k^L]\\
\nonumber      &=\mathbb{E}(x_0'\Delta_0(N)x_0)=0,
\end{align}
where $x_k^*$, $u_k^*$ and $\tilde{u}_k^*$ stand for the optimal state and optimal controllers respectively. From Assumption 1, i.e., $R>0$, $R^L>0$ and $Q=D'D\geq0$, we have that
\begin{align}
u_k^*=0,\tilde{u}_k^{L*}=0,Dx_k^*=0.\nonumber
\end{align}
Noting Assumption 2, i.e., ($A,Q^{\frac{1}{2}}$) is observable, it yields that $x_0=0$, which is a discrepancy of $x_0\neq 0$. Thus, there exists $l>0$ satisfying $\Delta_0(l)>0$. Hence, we have shown that $\Delta=\lim_{N\to\infty}\Delta_0(N)>0$. Similarly, we can obtain that $P^W>0$. Now the proof of Theorem 2 is finished.
\end{proof}
\section{Proof of Theorem 3}
\begin{proof}
``Sufficiency'': Under Assumptions 1 and 2, supposing that there exist solutions $P^W$ and $P^P$ to the algebraic Riccati equations (\ref{30}) and (\ref{31}) such that $P^W>0$ and $\Delta>0$, we shall show that the system (\ref{28}) is stabilizable in the mean-square sense.

Combining (\ref{27}) with the optimal performance (\ref{26}), we denote the Lyapunov function $\tilde{V}_k$ as
\begin{align}
\nonumber&\tilde{V}_k\\
\nonumber&=\mathbb{E}\bigg\{x_k'P^W\hat{x}_{k|k}^W\hspace{-0.8mm}+\hspace{-0.8mm}x_k'P^P(\hat{x}_{k|k}^P\hspace{-0.8mm}-\hspace{-0.8mm}\hat{x}_{k|k}^W)\hspace{-0.8mm}+\hspace{-0.8mm}\sum_{i=k}^\infty\{(x_i\hspace{-0.8mm}-\hspace{-0.8mm}\hat{x}_{i|i}^P)'\\
\nonumber&\quad\times[A'\Delta_{i+1}A\hspace{-0.8mm}+Q-(A-G_{i+1|i}^PHA)'P_{i+1}^P(A\\
&\quad-G_{i+1|i}^PHA)](x_i-\hspace{-0.8mm}\hat{x}_{i|i}^P)-pQ_vG_{i+1|i}^{P'}P_{i+1}^PG_{i+1|i}^P\}\bigg\}.\label{B.1}
\end{align}
Accordingly, we have
\begin{align}
\nonumber&\tilde{V}_k-\tilde{V}_{k+1}\\
\nonumber&=\mathbb{E}\bigg[x_k'P^W\hat{x}_{k|k}^W\hspace{-0.8mm}+\hspace{-0.8mm}x_k'P^P(\hat{x}_{k|k}^P\hspace{-0.8mm}-\hspace{-0.8mm}\hat{x}_{k|k}^W)\hspace{-0.8mm}+\hspace{-0.8mm}(x_k\hspace{-0.8mm}-\hspace{-0.8mm}\hat{x}_{k|k}^P)'\\
\nonumber&\quad\times[(1-\hspace{-0.8mm}p)A'P^WA\hspace{-0.8mm}+Q+\hspace{-0.8mm}pA'P^PG_{k+1|k}^PHA](x_k-\hspace{-0.8mm}\hat{x}_{k|k}^P)\\
\nonumber&\quad-x_{k+1}'P^W\hat{x}_{{k+1}|{k+1}}^W\hspace{-0.8mm}-\hspace{-0.8mm}x_{k+1}'P^P(\hat{x}_{{k+1}|{k+1}}^P\hspace{-0.8mm}-\hspace{-0.8mm}\hat{x}_{{k+1}|{k+1}}^W)\bigg]\\
\nonumber&=\mathbb{E}\big\{\hspace{-0.8mm}x_k'(P^W\hspace{-0.8mm}-\hspace{-0.8mm}A'P^WA\hspace{-0.8mm}+\hspace{-0.8mm}M'\Gamma^{-1}M)x_k\hspace{-0.8mm}-\hspace{-0.8mm}x_k'M'\Gamma^{-1}Mx_k\\
\nonumber&\quad+(\hat{x}_{k|k}^P\hspace{-0.8mm}-\hspace{-0.8mm}\hat{x}_{k|k}^W)'[P^P-P^W+pA'P^WA-pA'P^PA]\\
\nonumber&\quad\times(\hat{x}_{k|k}^P\hspace{-0.8mm}-\hspace{-0.8mm}\hat{x}_{k|k}^W)-2\tilde{u}_k^{P'}B^{P'}[pP^P+(1-p)P^W]A(\hat{x}_{k|k}^P\hspace{-0.8mm}\\
\nonumber&\quad-\hspace{-0.8mm}\hat{x}_{k|k}^W)-\tilde{u}_k^{P'}(\Omega-R^R)\tilde{u}_k^P-2u_k'B'P^WA\hat{x}_{k|k}^W\\
\nonumber&\quad-u_k'(\Gamma\hspace{-0.8mm}-\hspace{-0.8mm}R)u_k\big\}\\
\nonumber&=\mathbb{E}\big\{x_k'Qx_k+u_k'Ru_k+\tilde{u}_k^{P'}R^P\tilde{u}_k^P-(u_k+\Gamma^{-1}M\hat{x}_{k|k}^W)'\\
\nonumber&\quad\times\Gamma(u_k+\Gamma^{-1}M\hat{x}_{k|k}^W)-[\tilde{u}_k^P+\Omega^{-1}L(\hat{x}_{k|k}^P\hspace{-0.8mm}-\hspace{-0.8mm}\hat{x}_{k|k}^W)]'\\
&\quad\times\Omega[\tilde{u}_k^P+\Omega^{-1}L(\hat{x}_{k|k}^P\hspace{-0.8mm}-\hspace{-0.8mm}\hat{x}_{k|k}^W)]\big\}\label{B.8}\\
&=\mathbb{E}\big\{x_k'Qx_k+u_k'Ru_k+\tilde{u}_k^{P'}R^P\tilde{u}_k^P]\geq0,\label{B.2}
\end{align}
which implies that $\tilde{V}_k$ decreases with respect to $k$. Next we shall show that $\tilde{V}_k$ is bounded below.

Noting the optimal performance (\ref{26}) (without the additive noise) and the Lyapunov function $\tilde{V}_k$ (\ref{B.1}),
and letting the initial time $k\to\infty$, it can be readily obtained that $\lim_{k\to\infty}\tilde{V}_k\geq0$ which implies that $\tilde{V}_k$ is bounded below. Thus, $\tilde{V}_k$ is convergent.

Now select $m>0$. Taking summation for $k=m$ to $k=m+N$ on both sides of (\ref{B.2}) and letting $m\to\infty$, yielding
\begin{align}
\nonumber&\lim_{m\to\infty}\sum_{k=l}^{l+N}\mathbb{E}\big\{x_k'Qx_k+u_k'Ru_k+\tilde{u}_k^{P'}R^P\tilde{u}_k^P]\\
&=\lim_{m\to\infty}\tilde{V}_l-\tilde{V}_{l+N+1}=0,\label{B.3}
\end{align}
where (\ref{B.3}) holds for the convergence of $\tilde{V}_k$.

Noting the optimal performance (\ref{26}) (without the additive noise of the system), and choosing the initial value $x_0=0$, we have that $tr\hspace{-0.8mm}\sum_{k=0}^N\{\Sigma_{k|k}^P[(A'\Delta_{k+1} A\hspace{-0.8mm}+Q-p(A-G_{k+1|k}^PHA)'P_{k+1}^P(A-G_{k+1|k}^PHA)]-pQ_vG_{k+1|k}^{P'}P_{k+1}^PG_{k+1|k}^P\}\geq0$. Thus, we have that
\begin{align}
\nonumber&\sum_{k=0}^{N}\mathbb{E}\big\{x_k'Qx_k+u_k'Ru_k+\tilde{u}_k^{P'}R^P\tilde{u}_k^P]\geq\mathbb{E}\big\{x_0'\big[P_0^W\hat{x}_{0|0}^W\\
\nonumber&+\hspace{-0.8mm}P_0^P(\hat{x}_{0|0}^P-\hspace{-0.8mm}\hat{x}_{0|0}^W)\big]\big\}\hspace{-0.8mm}+\hspace{-0.8mm}\sum_{k=0}^Ntr\bigg\{\Sigma_{k|k}^P[A'\Delta_{k+1}A\hspace{-0.8mm}+Q-p(A\\
\nonumber&-G_{k+1|k}^PHA)'P_{k+1}^P(A-G_{k+1|k}^PHA)]\hspace{-0.8mm}-pQ_vG_{k+1|k}^{P'}P_{k+1}^P\\
\nonumber&\times G_{k+1|k}^P\geq\mathbb{E}\big\{x_0'\big[P_0^W\hat{x}_{0|0}^W+\hspace{-0.8mm}P_0^P(\hat{x}_{0|0}^P-\hspace{-0.8mm}\hat{x}_{0|0}^W)\big]
\end{align}

Setting $\sigma=0$, with Lemma 1, the above equation becomes
\begin{align}
\nonumber&\sum_{k=0}^{N}\mathbb{E}\big\{x_k'Qx_k+u_k'Ru_k+\tilde{u}_k^{P'}R^P\tilde{u}_k^P]\\
\nonumber&\geq\hspace{-0.8mm}\mathbb{E}\{x_0'[(1-p)P_0^W+p\mu)]x_0\}.
\end{align}
Through a time-shift of length of $m$, letting $m\to\infty$ and noting (\ref{B.3}), it yields
\begin{align}
\nonumber&\lim_{m\to\infty}\sum_{k=m}^{N+m}\mathbb{E}\big\{x_k'Qx_k+u_k'Ru_k+\tilde{u}_k^{P'}R^P\tilde{u}_k^P]\\
\nonumber&\geq\hspace{-0.8mm}\lim_{m\to\infty}\mathbb{E}\{x_m'[(1-p)P_m^W+p\mu)]x_m\}=0.
\end{align}
Noting $P_k^W>0$, we have that $\lim_{k\to\infty}\mathbb{E}\{x_k'x_k\}=0$.
Thus, the system (\ref{28}) can be stabilized in the mean-square sense by the controllers (\ref{37}) and (\ref{38}). Now we shall show that the controllers (\ref{37}) and (\ref{38}) can minimize the infinite-horizon performance (\ref{29}).

Taking summation on both sides of (\ref{B.8}) from $k=0$ to $k=\infty$ and noting the convergence of $\tilde{V}_k$, the infinite-horizon performance (\ref{29}) can be written as
\begin{align}
\nonumber J&=\tilde{V}_0+\mathbb{E}\sum_{k=0}^\infty\{(u_k+\Gamma^{-1}M\hat{x}_{k|k}^W)'\Gamma(u_k+\Gamma^{-1}M\hat{x}_{k|k}^W)\\
\nonumber&\quad-[\tilde{u}_k^P\hspace{-0.8mm}+\hspace{-0.8mm}\Omega^{-1}L(\hat{x}_{k|k}^P\hspace{-0.8mm}-\hspace{-0.8mm}\hat{x}_{k|k}^W)]'\Omega[\tilde{u}_k^P\hspace{-0.8mm}+\Omega^{-1}L(\hat{x}_{k|k}^P\hspace{-0.8mm}-\hspace{-0.8mm}\hat{x}_{k|k}^W)]\}
\end{align}
Since $\Gamma>0$ and $\Omega>0$, the stabilizing controllers (\ref{37}) and (\ref{38}) can also minimize (\ref{29}), and the optimal performance is as (\ref{39}). This completes the proof of the sufficiency. The proof of the necessity has been given in Appendix B.
\end{proof}
\section{Proof of Theorem 4}
\begin{proof}
Under assumption 1 and 2, if $\sqrt{p}|\lambda_{max}(A\hspace{-0.8mm}-\hspace{-0.8mm}B^P\Omega^{-1}L)|<1$, assuming that there exist solutions $P^W$ and $P^P$ to the algebraic Riccati equations (\ref{30}) and (\ref{31}) such that $P^W>0$ and $\Delta>0$, we shall show that the system (\ref{7}) is bounded in the mean-square sense.

To begin with, we shall give some preliminary work as follows:
\begin{align}
\nonumber&\mathbb{E}[(\hat{x}_{k|k}^P-\hat{x}_{k|k}^W)'(\hat{x}_{k|k}^P-\hat{x}_{k|k}^W)]\\
\nonumber&=\mathbb{E}\{[(x_k\hspace{-0.8mm}-\hspace{-0.8mm}\hat{x}_{k|k}^W)\hspace{-0.8mm}-\hspace{-0.8mm}(x_k\hspace{-0.8mm}-\hspace{-0.8mm}\hat{x}_{k|k}^P)]'[(x_k\hspace{-0.8mm}-\hspace{-0.8mm}\hat{x}_{k|k}^W)\hspace{-0.8mm}-\hspace{-0.8mm}(x_k\hspace{-0.8mm}-\hspace{-0.8mm}\hat{x}_{k|k}^P)]\}\\
\nonumber&=tr(\Sigma_{k|k}^W)\hspace{-0.8mm}-\hspace{-0.8mm}\mathbb{E}[x_k'(x_k-\hat{x}_{k|k}^P)]\hspace{-0.8mm}-\hspace{-0.8mm}\mathbb{E}[(x_k\hspace{-0.8mm}-\hspace{-0.8mm}\hat{x}_{k|k}^P)'x_k]\hspace{-0.8mm}+\hspace{-0.8mm}tr(\Sigma_{k|k}^P)\\
\nonumber&=tr(\Sigma_{k|k}^W-\Sigma_{k|k}^P-\Sigma_{k|k}^P+\Sigma_{k|k}^P)\\
         &=tr(\Sigma_{k|k}^W-\Sigma_{k|k}^P)\label{D.1}
\end{align}
\begin{align}
\nonumber&\mathbb{E}[(x_k-\hat{x}_{k|k}^W)'(\hat{x}_{k|k}^P-\hat{x}_{k|k}^W)]\\
\nonumber&=\mathbb{E}[x_k'(\hat{x}_{k|k}^P-\hat{x}_{k|k}^W)]=\mathbb{E}\{x_k'[(x_k-\hat{x}_{k|k}^W)-(x_k-\hat{x}_{k|k}^P)]\}\\
         &\qquad\qquad\qquad\qquad\quad=tr(\Sigma_{k|k}^W-\Sigma_{k|k}^P).\label{D.2}
\end{align}
\begin{align}
\nonumber&\mathbb{E}[x_k'(\hat{x}_{k|k}^P-\hat{x}_{k|k}^W)]\\
\nonumber&=\mathbb{E}\{[(x_k-\hat{x}_{k|k}^P)+(\hat{x}_{k|k}^P-\hat{x}_{k|k}^W)+\hat{x}_{k|k}^W]'(\hat{x}_{k|k}^P-\hat{x}_{k|k}^W)\}\\
\nonumber&=tr(\Sigma_{k|k}^P+\Sigma_{k|k}^W-\Sigma_{k|k}^P)\\
         &=tr(\Sigma_{k|k}^W).\label{D.6}
\end{align}
In virtue of (\ref{42}) and (\ref{43}), we have
\begin{align}
\nonumber x_{k+1}&=Ax_k-B\Gamma^{-1}M\hat{x}_{k|k}^W-B^P\Omega^{-1}L(\hat{x}_{k|k}^P-\hat{x}_{k|k}^W)+\omega_k\\
\nonumber        &=Ax_k+B\Gamma^{-1}M[(x_k-\hat{x}_{k|k}^W)-x_k]\\
\nonumber        &\quad-B^P\Omega^{-1}L(\hat{x}_{k|k}^P-\hat{x}_{k|k}^W)+\omega_k\\
\nonumber        &=(A-B\Gamma^{-1}M)x_k+B\Gamma^{-1}M(x_k-\hat{x}_{k|k}^W)\\
                 &\quad-B^P\Omega^{-1}L(\hat{x}_{k|k}^P-\hat{x}_{k|k}^W)+\omega_k.\label{D.7}
\end{align}
Using (\ref{D.1}), (\ref{D.2}), (\ref{D.6}) and (\ref{D.7}), it yields that
\begin{align}
\nonumber&\mathbb{E}[x_{k+1}'x_{k+1}]\\
\nonumber&=\mathbb{E}[x_k'(A\hspace{-0.8mm}-\hspace{-0.8mm}B\Gamma^{-1}M)'(A\hspace{-0.8mm}-\hspace{-0.8mm}B\Gamma^{-1}M)x_k]+\hspace{-0.8mm}2tr[\Sigma_{k|k}^W(A-B\\
\nonumber&\quad\times\Gamma^{-1}M)B\Gamma^{-1}M-2\Sigma_{k|k}^W(A-B\Gamma^{-1}M)'B^P\Omega^{-1}L\\
\nonumber&\quad-2(\Sigma_{k|k}^W-\Sigma_{k|k}^P)M'\Gamma^{-1}B'B^P\Omega^{-1}L+\Sigma_{k|k}^WM'\Gamma^{-1}B'\\
&\quad\times B\Gamma^{-1}M+(\Sigma_{k|k}^W-\Sigma_{k|k}^P)L'\Omega^{-1}B^{P'}B^P\Omega^{-1}L+Q_\omega].\label{D.10}
\end{align}
Noting Lemma 5, i.e., $\Sigma_{k|k}^W$ and $\Sigma_{k|k}^P$ are convergent. Thus,  the second term of equation (\ref{D.10}) is convergent obviously.
Hence, it can be known that $\lim_{k\to}\mathbb{E}(x_k'x_k)$ is bounded in the mean-square sense if and only if the following linear system
\begin{align}
\beta_{k+1}=(A-B\Gamma^{-1}M)\beta_k,\label{D.3}
\end{align}
with the initial value $\beta_0=x_0$, is stable in the mean-square sense.

Noting (\ref{30}), (\ref{32}) and (\ref{33}), (\ref{30}) can be written as
\begin{align}
\nonumber P^W&=M'\Gamma^{-1}R\Gamma^{-1}M+Q\\
             &\quad+(A-B\Gamma^{-1}M)'P^W(A-B\Gamma^{-1}M).\label{D.4}
\end{align}
Now we shall show that (\ref{D.3}) is stable in the mean-square sense. Denote the Lyapunov function $W_k$ as
\begin{align}
\nonumber W_k=\mathbb{E}(\beta_k'P^W\beta_k).
\end{align}
By making use of (\ref{D.4}), we get
\begin{align}
\nonumber &W_{k+1}-W_k\\
\nonumber &=\mathbb{E}\{\beta_k'[(A-B\Gamma^{-1}M)'P^W(A-B\Gamma^{-1}M)-P^W]\beta_k\}\\
\nonumber &=-\mathbb{E}[\beta_k'(M'\Gamma^{-1}R\Gamma^{-1}M+Q)\beta_k],
\end{align}
which implies that $W_k$ decreases with respect to $k$ and bounded below, i.e, $W_k$ is convergent. Adding from $k=0$ to $k=m$ on both sides of the above equation, we have
\begin{align}
\nonumber &W_{m+1}-W_0=-\sum_{k=0}^m\mathbb{E}[\beta_k'(M'\Gamma^{-1}R\Gamma^{-1}M+Q)\beta_k].
\end{align}
Letting $m\to\infty$ on both sides of the above equation, we get
\begin{align}
\nonumber&\lim_{m\to\infty}\mathbb{E}(\beta_{m+1}'P^W\beta_{m+1})\\
\nonumber&=\mathbb{E}(\beta_0'P^W\beta_0)-\lim_{m\to\infty}\sum_{k=0}^m\mathbb{E}[\beta_k'(M'\Gamma^{-1}R\Gamma^{-1}M+Q)\beta_k].
\end{align}
Due to the convergence of $W_k$, it can be obtained that $\lim_{m\to\infty}\mathbb{E}[\beta_m'(M'\Gamma^{-1}R\Gamma^{-1}M+Q)\beta_m]=0$. Thus, $\lim_{k\to\infty}\mathbb{E}[\beta_k'\beta_k]=0$, i.e., the system (\ref{D.3}) is stable in the mean-square sense. Hence, the system (\ref{7}) is bounded in the mean-square sense.

Now we shall show that (\ref{42}) and (\ref{43}) minimize the performance (\ref{41}). Denote
\begin{align}
\nonumber\tilde{W}_k&=\mathbb{E}[x_k'P^W\hat{x}_{k|k}^W\hspace{-0.8mm}+\hspace{-0.8mm}x_k'P^P(\hat{x}_{k|k}^P\hspace{-0.8mm}-\hspace{-0.8mm}\hat{x}_{k|k}^W)]\hspace{-0.8mm}+\hspace{-0.8mm}\mathbb{E}\sum_{i=k}^\infty\{(x_i\hspace{-0.8mm}-\hspace{-0.8mm}\hat{x}_{i|i}^P)'\\
\nonumber&\quad\times[(A'\Delta A\hspace{-0.8mm}+Q-(A-G^PHA)'P^P(A-G^PHA)](x_i\\
\nonumber&\quad-\hspace{-0.8mm}\hat{x}_{i|i}^P)\}+\sum_{i=k}^\infty tr\{Q_\omega[(\Delta_{k+1}-p(I\hspace{-0.8mm}-G_{k+1|k}^PH)'P_{k+1}^P\\
\nonumber&\quad\times(I-G_{k+1|k}^PH)]-pQ_vG_{k+1|k}^{P'}P_{k+1}^PG_{k+1|k}^P\}.
\end{align}
Similar to (\ref{B.8}), it yields that
\begin{align}
\nonumber&\tilde{W}_k-\tilde{W}_{k+1}\\
\nonumber&=\mathbb{E}\big\{x_k'Qx_k+u_k'Ru_k+\tilde{u}_k^{P'}R^P\tilde{u}_k^P-(u_k+\Gamma^{-1}M\hat{x}_{k|k}^W)'\\
\nonumber&\quad\times\Gamma(u_k+\Gamma^{-1}M\hat{x}_{k|k}^W)-[\tilde{u}_k^P+\Omega^{-1}L(\hat{x}_{k|k}^P\hspace{-0.8mm}-\hspace{-0.8mm}\hat{x}_{k|k}^W)]'\\
\nonumber&\quad\times\Omega[\tilde{u}_k^P+\Omega^{-1}L(\hat{x}_{k|k}^P\hspace{-0.8mm}-\hspace{-0.8mm}\hat{x}_{k|k}^W)]\big\}.
\end{align}
Noting Lemma 5 and the bounedness in the mean-square sense of the system (\ref{7}), it can be obtained that $\lim_{k\to\infty}\tilde{W}_k$ is bounded. Adding from $k=0$ to $k=N$ on both sides of the above equation, the performance (\ref{41}) becomes
\begin{align}
\nonumber\tilde{J}&=\lim_{N\to\infty}\frac{1}{N}\bigg\{\tilde{W}_0-\tilde{W}_{N+1}+\sum_{k=0}^N\{(u_k+\Gamma^{-1}M\hat{x}_{k|k}^W)'\\
\nonumber&\quad\times\Gamma(u_k+\Gamma^{-1}M\hat{x}_{k|k}^W)+[\tilde{u}_k^P+\Omega^{-1}L(\hat{x}_{k|k}^P\hspace{-0.8mm}-\hspace{-0.8mm}\hat{x}_{k|k}^W)]'\\
\nonumber&\quad\times\Omega[\tilde{u}_k^P+\Omega^{-1}L(\hat{x}_{k|k}^P\hspace{-0.8mm}-\hspace{-0.8mm}\hat{x}_{k|k}^W)]\}\bigg\}\\
\nonumber&=(u_k+\Gamma^{-1}M\hat{x}_{k|k}^W)'\Gamma(u_k+\Gamma^{-1}M\hat{x}_{k|k}^W)+[\tilde{u}_k^P\\
\nonumber&\quad+\Omega^{-1}L(\hat{x}_{k|k}^P\hspace{-0.8mm}-\hspace{-0.8mm}\hat{x}_{k|k}^W)]'\Omega[\tilde{u}_k^P+\Omega^{-1}L(\hat{x}_{k|k}^P\hspace{-0.8mm}-\hspace{-0.8mm}\hat{x}_{k|k}^W)]\\
\nonumber&\quad+tr\{\Sigma^P[(A'\Delta A\hspace{-0.8mm}+Q-(A-G^PHA)'P^P(A\\
\nonumber&\quad-G^PHA)]+Q_\omega[(\Delta_{k+1}-p(I\hspace{-0.8mm}-G_{k+1|k}^PH)'P_{k+1}^P\\
\nonumber&\quad\times(I-G_{k+1|k}^PH)]-pQ_vG_{k+1|k}^{P'}P_{k+1}^PG_{k+1|k}^P\}.
\end{align}
Noting that $\Gamma>0$ and $\Omega>0$, it can be readily obtained that the optimal controllers are as (\ref{42}) and (\ref{43}). Accordingly, the optimal performance is as (\ref{44}). The sufficiency of Theorem 4 is completed.

``Necessity'': Suppose that the system (\ref{7}) is bounded in the mean-square sense. we shall prove that there exist solutions $P^W$ and $P^P$ to the algebraic Riccati equations (\ref{30}) and (\ref{31}) such that $P^W>0$ and $\Delta>0$.

Substituting (\ref{37}) and (\ref{38}) into the system (\ref{28}) and  replacing $x_k$ with $z_k$, it yields
\begin{align}
\nonumber z_{k+1}&=Az_k-B\Gamma^{-1}M\hat{z}_{k|k}^W-B^P\Omega^{-1}L(\hat{z}_{k|k}^P-\hat{z}_{k|k}^W)\\
\nonumber        &=Az_k+B\Gamma^{-1}M[(z_k-\hat{z}_{k|k}^W)-z_k]\\
\nonumber        &\quad-B^P\Omega^{-1}L(\hat{z}_{k|k}^P-\hat{z}_{k|k}^W)\\
\nonumber        &=(A-B\Gamma^{-1}M)z_k+B\Gamma^{-1}M(z_k-\hat{z}_{k|k}^W)\\
        &\quad-B^P\Omega^{-1}L(\hat{z}_{k|k}^P-\hat{z}_{k|k}^W),\label{D.8}
\end{align}
with initial value $z_0=x_0$. Define $s_k$ as the following equation
\begin{align}
\nonumber s_{k+1}&=(A-B\Gamma^{-1}M)s_k+B\Gamma^{-1}M(s_k-\hat{s}_{k|k}^W)\\
                 &\quad-B^P\Omega^{-1}L(\hat{s}_{k|k}^P-\hat{s}_{k|k}^W)+\omega_k.\label{D.9}
\end{align}
with known initial value $s_k=0$.

Noting (\ref{7}), (\ref{8}) and (\ref{9}), it can be obtained that $x_k=z_k+s_k$. Through simple calculation, it can be known that $s_k$ is orthogonal to $z_k$. Thus, it can be readily obtained that ``the system (\ref{D.7}) is bounded in the mean-square sense'' is equivalent to ``the system (\ref{8}) is stabilizable in the mean-square sense''. From Theorem 3, if the system (\ref{28}) is stable in the mean-square sense, then there exist solutions $P^W$ and $P^P$ to the algebraic Riccati equations (\ref{30}) and (\ref{31}) such that $P^W>0$ and $\Delta>0$. Thus, the same conclusion can be obtained if the system (\ref{7}) is bounded in the mean-square sense. This completes the proof of the necessity.
\end{proof}

% (used to reserve space for the reference number labels box)


\begin{thebibliography}{0}
\bibitem{R1}
W. Zhang, M. S. Branicky, and S. M. Phillips, ``Stability of networked control systems," \emph{ IEEE Control Systems Magazine}, vol. 21, no. 1, pp. 84-99, 2001.
\bibitem{R2}
D. Soudbakhsh, L. Phan, A. Annaswamy and O. Sokolsky, `Co-design of arbitrated network control systems with overrun strategies," \emph{  IEEE Transactions on Industrial Electronics}, vol. 636, no. 2, pp. 1356-1362, 2019.
\bibitem{R3}
A. Cetinkaya, H. Ishii and T. Hayakawa, ``Analysis of stochastic switched systems with application to networked control under jamming attacks," \emph{ IEEE Trans. Autom. Control}, vol. 64, no. 5, pp. 2013-2028, 2019.
\bibitem{R4}
Y. Yuan, Z. Wang, P. Zhang and H. Liu, ``Near-optimal resilient control strategy design for state-saturated networked systems under stochastic communication protocol," \emph{  IEEE Trans. Cybern}, vol. 49, no. 8, pp. 3155-3167, 2019.
\bibitem{R5}
B. Sun, D. Zhu and S. Yang, ``A bioinspired filtered backstepping tracking control of 7000-m manned submarine vehicle," \emph{  IEEE Trans. Ind. Electron.}, vol. 61, no. 7, pp. 3682-3693, 2014.
\bibitem{R6}
R. Roberts, ``The role of dynamic renegotiation and asymmetric information in financial contracting," \emph{ Journal of Financial Economics}, vol. 116, no. 1, pp. 61-81, 2015.
\bibitem{R7}
R. Horowitz and P. Varaiya, ``Control design of an automated highway system," \emph{ Proceedings of the IEEE}, vol. 88, no. 7, pp. 913-925, 2000.
\bibitem{R8}
J. Yong and X. Zhou, ``Stochastic Controls: Hamiltonian Systems and HJB Euqations," \emph{ New York: Springer-Verlag}, 1999.
\bibitem{R9}
D. Ross, ``Controller design for time lag systems via a quadratic criterion," \emph{ IEEE Trans. Autom. Control}, vol. AC-16, no. 6, pp. 664-672, 1971.
\bibitem{R10}
W. Wonham, ``On a matrix Riccati equation of stochastic control," \emph{ SIAM J. Control}, vol. 6, no. 4, pp. 681-697, 1968.
\bibitem{R11}
M. Davis, ``Linear estimation and stochastic control," \emph{ London: Chapman and Hall}, 1977.
\bibitem{R12}
R. Krtolica, ``Stabillity of linear feedback systems with random communication delays," \emph{ International Journal of Control}, vol. 59, no. 4, pp. 925-953, 1994.
\bibitem{R13}
J. Bismut, ``Linear quadratic optimal stochastic control with random coefficient," \emph{ SIAM J. Control}, vol. 14, no. 3, pp. 419-444, 1976.
\bibitem{R14}D. Lei, J. Li and Z. Liu, ``Supply chain contracts under demand and cost disruptions with asymmetric information," \emph{  International Journal of Production Economics}, vol. 139, no. 1, pp. 116-126, 2012.
\bibitem{R15}Y. Hu and S. Tang, ``Mixed deterministic and random optimal control of linear stochastic systems with quadratic costs," \emph{  Probability, Uncertainty and Quantitative Risk}, DOI 10.1186/s41546-018-0035-x.
\bibitem{R16}
Y. Ouyang,  H. Tavafoghi and D. Teneketzis, ``Dynamic games with asymmetric information: common information based perfect Bayesian equilibria and sequential decomposition," \emph{ IEEE Trans. Autom. Control}, vol. 62, no. 1, pp. 222-237, 2017.
\bibitem{R17}
S. Asghari, Y. Ouyang and A. Nayyar, ``Optimal local and remote controllers with unreliable uplink channels," \emph{ IEEE Trans. Autom. Control}, DOI: 10.1109/TAC.2018.2853807.
\bibitem{R18}
X. Liang and J. Xu, ``Control for networked control systems with remote and local controllers over unreliable communication channel," \emph{ Automatica}, vol. 98, pp. 86-94, 2018.
\bibitem{R19}
J. Xu, H. Zhang and T. Chai, ``Necessary and Sufficient Conditions for a Unique Open-Loop Stackelberg Strategy for the Two-player Game," \emph{ IEEE Trans. Autom. Control}, vol. 60, no. 5, pp. 1356-1361, 2015.
\bibitem{R20}
G. Freiling, G. Jank, and H. Kandil, ``Discrete-time Riccati equations in open-loop Nash and Stackelberg games," \emph{ Eur. J. Control.} vol. 5, no. 1, pp. 56-66, 1999.
\bibitem{R21}
M. Jungers, ``On linear-quadratic Stackelberg games with time preference rates," \emph{ IEEE Trans. Autom. Control}, vol. 53, no. 2, pp. 621-625, 2008.
\bibitem{R22}
Q. Qi and H. Zhang, ``Optimal control for network control systems with state-package dropouts," \emph{ Proc. 54rd IEEE Conf. Decision and Control}, pp. 2477-2482, 2015.
\bibitem{R23}
H. Zhang, L. Li, J. Xu and M. Fu, ``Linear quadratic regulation and stabilization of discrete-time systems with delay and multiplicative noise," \emph{ IEEE Trans. Autom. Control}, vol. 60, no. 10, pp. 2599-2613, 2015.
\bibitem{R24}
K. You and L. Xie, ``Minimum data rate for mean square stabilization of discrete LTI systems over lossy channels," \emph{ IEEE Trans. Autom. Control}, vol. 55, no. 10, pp. 2373-2378, 2010.
\bibitem{R25}
J. Lu and R. Skelton, ``Mean-square small gain theorem for stochastic control: discrete-time case," \emph{ IEEE Trans. Autom. Control}, vol. 47, no. 3, pp. 490-494, 2002.
\bibitem{R26}
O. Imer, S. Y\"{u}ksel, and T. Ba\c{s}ar, ``Optimal control of LTI systems over unreliable communication links,"  \emph{ Automatica},  vol. 42, no. 9, pp 1429-1439, 2006.
\bibitem{R27}
B. Sinopoli, L. Schenato, M. Franceschetti, K. Poolla, M. Jordan, and S. Sastry, ``Kalman filtering with intermittent observations," \emph{ IEEE Trans. Autom. Control}, vol. 49, no. 9, pp. 1453-1464, 2004.
\bibitem{R28}
V. Kucera, ``A contribution to matrix quadratic equations," \emph{ IEEE Trans. Autom. Control}, vol. 17, no. 3, pp. 344-347, 1972.
\bibitem {R29}
E. Garone, B. Sinopoli, A. Goldsmith, and A. Casavola, ``LQG control for MIMO systems over multiple erasure channels with perfect acknowledgment," \emph{ IEEE Trans. Autom. Control}, vol. 57, no. 2, pp. 450-456, 2012.
\bibitem{R30}
A. Bouhtouri, D. Hinrichsen, and A. Pritchard , ``$H_\infty$ type control for discrete-time stochastic systems," \emph{ Int. J. Robust. Nonlin. Control}, vol. 9, no. 13, pp. 923-948, 1999.
% Can use something like this to put references on a page
% by themselves when using endfloat and the captionsoff option.
\ifCLASSOPTIONcaptionsoff
  \newpage
\fi

\end{thebibliography}
\end{document}